\def\section{\@startsection{section}{1}%
	\z@{.7\linespacing\@plus\linespacing}{.5\linespacing}%
	{\bfseries
		\centering
}}
\def\@secnumfont{\bfseries}
\newtheorem{teo}{Theorem}
\newtheorem{pro}{Proposition}
\newtheorem{lem}{Lemma}
\newtheorem{cor}{Corollary}
\newtheorem*{rem}{Remark}
\title{Summing free unitary Brownian motions with applications to quantum information}
\author{Nizar Demni}
\address{ Aix-Marseille Universit\'e CNRS Centrale
Marseille I2M - UMR 7373. 39 rue F. Joliot Curie, 13453 Marseille, France }
\email{nizar.demni@univ-amu.fr}
\author[T. Hamdi]{Tarek Hamdi}
\address{Department of Management Information Systems \\ College of Business Management \\ Qassim University  \\ Saudi Arabia
and Laboratoire d'Analyse Math\'ematiques et applications LR11ES11 \\ Universit\'e de Tunis El-Manar \\ Tunisie}
\email{ t.hamdi@qu.edu.sa } 
\keywords{Bell states; reduced density matrix; unitary Brownian motion; Free unitary Brownian motion; Free Jacobi process} 
\begin{document}
\maketitle
\begin{abstract} 
Motivated by quantum information theory, we introduce a dynamical random state built out of the sum of $k \geq 2$ independent unitary Brownian motions. 
In the large size limit, its spectral distribution equals, up to a normalising factor, that of the free Jacobi process associated with a single self-adjoint projection with trace $1/k$. Using free stochastic calculus, we extend this equality to the radial part of the free average of $k$ free unitary Brownian motions and to the free Jacobi process associated with two self-adjoint projections with trace $1/k$, provided the initial distributions coincide. 
In the single projection case, we derive a binomial-type expansion of the moments of the free Jacobi process which extends to any $k \geq 3$ the one derived in \cite {DHH} in the special case $k=2$. 
Doing so give rise to a non normal (except for $k=2$) operator arising from the splitting of a self-adjoint projection into the convex sum of $k$ unitary operators. 
This binomial expansion is then used to derive a pde satisfied by the moment generating function of this non normal operator and for which we determine the corresponding characteristic curves.
\end{abstract} 
\tableofcontents
\section{Introduction and Motivation} 
\subsection{Random matrices in quantum information theory}
Randomness lies at the heart of Shannon's pioneering work on classical information theory (see the expository paper \cite{Ver}). It also plays a key role in quantum information theory through the use of techniques from random matrix theory. Actually, the latter  open the way to choose typical random subspaces in large-size quantum systems which violate additivity conjectures for minimum output R\'enyi and von Neumann entropies (see \cite{Col-Nec} and references therein). Here, typicality is taken with respect to the uniform measure in the compact complex Grassmann manifold or equivalently with respect to the Haar distribution in the group of unitary matrices. Note that this distribution together with Ginibre random matrices also served in \cite{CNPZ} to generate random density matrices induced from states in bipartite systems (see \cite{KNPPZ} for similar constructions of quantum channels). 

A natural dynamical version of the Haar distribution in the group of unitary matrices is the so-called unitary Brownian motion (\cite{Lia}). This stochastic process was used in \cite{Nec-Pel} where the authors introduced and studied a random state drawn from the Brownian motion on the complex projective space (the row vector of a unitary Brownian motion). There, the main problem was to write explicitly the joint distributions of tuples formed by the moduli of the state coordinates. This problem was entirely solved in \cite{Dem0} using spherical harmonics in the unitary group. To the best of our knowledge, \cite{Nec-Pel} and \cite{Dem0} are the only papers where the unitary Brownian motion is used as a random model in quantum information theory, in contrast to the high occurrence of Haar-distributed unitary matrices (\cite{Col-Nec}). Moreover, it is tempting and challenging as well to prove finite-time analogues of important results in quantum information theory proved using Haar unitary matrices and their Weingarten Calculus (as summarized in \cite{Col-Nec}). 

In this paper, we appeal once more to the unitary Brownian motion in order to introduce a stochastic process valued in the space of density matrices (see \eqref{Model} below). The large time limit of this process was already constructed in \cite{CNPZ} by 
partially tracing a pure random state in a bipartite quantum system. 

\subsection{The dynamical density matrix}
Let $N \geq 1$ be a positive integer and consider a bipartite quantum system $\mathscr{H}_A \otimes \mathscr{H}_B$,
where $\mathscr{H}_A, \mathscr{H_B},$ are complex $N$-dimensional Hilbert spaces. If $(e_j^A)_{j=1}^N, (e_j^B)_{j=1}^N$, are the canonical basis of $\mathscr{H}_A$ and $\mathscr{H_B}$ respectively, then  
\begin{equation*}
\psi:= \frac{1}{\sqrt{N}}\sum_{j=1}^N e_j^A \otimes e_j^B
\end{equation*}
is referred to as the Bell or maximally-entangled state. Now, consider $k \geq 2$ Haar-distributed unitary matrices $U_{\infty}^1(N), \dots U_{\infty}^k(N),$ and define the vector $\psi^k \in \mathscr{H}_A \otimes \mathscr{H}_B$ by: 
\begin{equation*}
\psi^k(N):= \frac{1}{\sqrt{N}} \sum_{m=1}^k\sum_{j=1}^N\left(U_{\infty}^m(N)e_j^A\right) \otimes e_j^B =  \frac{1}{\sqrt{N}}\sum_{j=1}^N\left( \sum_{m=1}^kU_{\infty}^m(N)e_j^A\right) \otimes e_j^B .
\end{equation*} 
Then the partial trace with respect to $\mathscr{H}_B$ of the pure state associated with $\psi^k(N)$  yields the following reduced state: 
\begin{equation*}
\widetilde{W}_{\infty}^k(N):= \frac{(U_{\infty}^1(N)+\dots+U_{\infty}^k(N))(U_{\infty}^1(N)+ \dots + U_{\infty}^k(N))^{\star}}{\textrm{tr}[(U_{\infty}^1(N)+\dots+U_{\infty}^k(N))(U_{\infty}^1(N)+ \dots + U_{\infty}^k(N))^{\star}]}
\end{equation*}
where $\textrm{tr}$ is the trace operator on the space of $N \times N$ matrices. Since the Haar distribution is the stationary distribution of the unitary Brownian motion, it is then natural to introduce the following stochastic process valued in the space of density matrices: 
\begin{equation}\label{Model}
t \mapsto \widetilde{W}_t^k(N):=  \frac{(U_t^1(N)+\dots+U_t^k(N))(U_t^1(N)+ \dots + U_t^k(N))^{\star}}{\textrm{tr}[ (U_t^1(N)+\dots+U_t^k(N))(U_t^1(N)+ \dots + U_t^k(N))^{\star}]},
\end{equation}
where $(U_{t}^j(N))_{t \geq 0}, 1 \leq j \leq k,$ are $k$ independent unitary Brownian motions. As we shall now explain, introducing this model is not simply a matter of replacing Haar-distributed matrices by unitary Brownian motions. 
Indeed, the large-size limit of $\widetilde{W}_t^2(N)$ for fixed time $t$ bears a close connection to an instance of the so-called free Jacobi process. 

\subsection{The large size limit of $\widetilde{W}_{\infty}^k(N)$ and the free Jacobi process}
Recall that independent random matrices behave in the large-size limit, under additional law-invariance assumptions, as $\star$-free operators (in Voiculescu's sense) in a tracial non commutative probability space, say $(\mathscr{A}, \tau)$ (\cite{Min-Spe}). 
For instance, independent Haar-distributed unitary matrices converge strongly and almost surely as $N \rightarrow \infty$ to Haar-distributed unitary operators (see \cite{CDK} and references therein). 
Consequently, the operator norm of $\widetilde{W}_{\infty}^k(N)$ converges almost surely as $N \rightarrow \infty$ to 
\begin{equation*}
\widetilde{W}_{\infty}^k := \frac{(U_{\infty}^1+\dots+U_{\infty}^k)(U_{\infty}^1+ \dots + U_{\infty}^k)^{\star}}{\tau[(U_{\infty}^1+\dots+U_{\infty}^k)(U_{\infty}^1+ \dots + U_{\infty}^k)^{\star}]}, 
\end{equation*}
where $\{U_{\infty}^j, 1 \leq j \leq k\}$ is a $k$-tuple of Haar unitary operators which are $\star$-free in $(\mathscr{A}, \tau)$. Note that 
\begin{equation*}
\widetilde{W}_{\infty}^k := \frac{(U_{\infty}^1+\dots+U_{\infty}^k)(U_{\infty}^1+ \dots + U_{\infty}^k)^{\star}}{k}, 
\end{equation*}
since $\tau(U_{\infty}^j) = 0$ and since $\tau(U_{\infty}^j(U_{\infty}^m)^{\star}) = \tau(U_{\infty}^j)\tau((U_{\infty}^m)^{\star}) = 0$ for any $1 \leq j < m \leq k$. In particular, when $k=2$, the invariance of the Haar distribution shows further that 
$\widetilde{W}_{\infty}^2/2$ is equally distributed as:
\begin{equation*}
\frac{({\bf 1} + U_{\infty}^1)({\bf 1}+U_{\infty}^1)^{\star}}{4} = \frac{2{\bf 1} + U_{\infty}^1+(U_{\infty}^1)^{\star}}{4}.
\end{equation*}
The spectral distribution of this Hermitian operator is known to be the arcsine distribution (\cite{HP}). It also coincides with an instance of the stationary distribution of the so-called free Jacobi process (\cite{Dem}). As shown in \cite{DHH}, 
this coincidence actually holds at any time $t > 0$: the spectral distribution of the free Jacobi process (in the compressed algebra) associated with an orthogonal projection of rank $1/2$ is the same as that of  
\begin{equation}\label{Op1}
\frac{2{\bf 1} + U_{2t} + U_{2t}^{\star}}{4} \quad \in \mathscr{A},
\end{equation}
where now $(U_s)_{s \geq 0}$ is a free unitary Brownian motion. As proved by Biane (\cite{Bia}), when properly time-rescaled, $(U_s)_{s \geq 0}$ is the large-size limit of the unitary Brownian motion  (\cite{Bia}). It is a unitary free L\'evy process as well with respect to the free multiplicative convolution on the unit circle (\cite{HP}). In particular, the spectral distribution of \eqref{Op1} coincides with that of  
\begin{equation}\label{Op2}
\frac{(U_t^1 + U_t^2)(U_t^1 + U_t^2)^{\star}}{4}
\end{equation}
where $(U_s^1)_{s \geq 0}, (U_s^2)_{s \geq 0}$ are two free copies of $(U_s)_{s \geq 0}$. Besides, \eqref{Op2} is the large-size limit $N \rightarrow \infty$ of (properly time-rescaled)
\begin{equation*}
\frac{(U_{t}^1(N)+U_{t}^2(N))(U_{t}^1(N)+U_{t}^2(N))^{\star}}{4}.
\end{equation*}
Up to a normalization, this Hermitian random matrix is nothing else but $(1/2)\widetilde{W}_t^2(N)$ which converges almost surely and strongly to 
\begin{equation}\label{Op2}
\frac{(U_t^1 + U_t^2)(U_t^1 + U_t^2)^{\star}}{2[2+\tau(U_t^1(U_t^2)^{\star})+ \tau(U_t^2(U_t^1)^{\star})]} = \frac{(U_t^1 + U_t^2)(U_t^1 + U_t^2)^{\star}}{4(1+e^{-t})},
\end{equation}
where the second equality follows from $\tau(U_t^1(U_t^2)^{\star}) = \tau(U_t^1)\tau((U_t^2)^{\star}) = e^{-t}$ (\cite{Bia}). 
In a nutshell, the free Jacobi process associated with an orthogonal projection with trace $1/2$ is, up to a normalising factor, the large-size limit of (the properly rescaled) $\widetilde{W}_t^2(N)$.

\subsection{Main results}
 The above picture extends to any integer $k \geq 2$ as follows. On the one hand, $\widetilde{W}_t^k(N)$ converges strongly and almost surely (properly time-rescaled), to the self-adjoint and unit-trace operator (\cite{CDK}): 
\begin{equation*}
\widetilde{W}_t^k := \frac{(U_t^1+\dots+U_t^k)(U_t^1+ \dots + U_t^k)^{\star}}{\tau[(U_t^1+\dots+U_t^k)(U_t^1+ \dots + U_t^k)^{\star}]} = \frac{(U_t^1+\dots+U_t^k)(U_t^1+ \dots + U_t^k)^{\star}}{k[1+(k-1)e^{-t}]},
\end{equation*}
where $(U_s^j)_{s \geq 0}, 1 \leq j \leq k$ are free copies of $(U_s)_{s \geq 0}$ in $(\mathscr{A}, \tau)$. On the other hand, if 
\begin{equation*}
G_t^k := U_t^1+\dots+U_t^k, \quad t \geq 0,
\end{equation*}
then Nica and Speicher's boxed convolution (\cite{Nic-Spe96}) implies that the $\star$-moments of $G_t^k$ in $(\mathscr{A}, \tau)$ coincide with those of $PU_tP$ in the compressed space $(P\mathscr{A}P, k\tau)$, where $P \in \mathscr{A}$ is a selfadjoint projection which is free from $(U_s, U_s^{\star})_{s \geq 0}$ and has rank $\tau(P) = 1/k$. Consequently, the corresponding Brown measures coincide and so do the spectral distributions of their radial parts, namely
\begin{equation*}
\frac{W_t^k}{k^2} := \frac{G_t^k (G_t^k )^\star}{k^2}=\frac{(U_t^1+\dots+U_t^k)}{k}\frac{(U_t^1+ \dots + U_t^k)^{\star}}{k}, 
\end{equation*}
in $(\mathscr{A}, \tau)$ and
\begin{equation*}
(PU_tP)(PU_tP)^{\star} = PU_tPU_t^{\star}P
\end{equation*}
 in $(P\mathscr{A}P, k\tau)$. 
In particular, the reduced density matrix $\widetilde{W}_t^k(N)$ and its large-size limit $\widetilde{W}_t^k$ complete the following commutative diagram: 
\begin{equation*}
\begin{array}{lcr}
\widetilde{W}_t^k(N) & t \longrightarrow \infty & \widetilde{W}_{\infty}^k(N) \\ 
& & \\ 
\substack{N \rightarrow \infty} \Big\downarrow & & \Big\downarrow \substack{N \rightarrow \infty} \\
& & \\
\widetilde{W}_t^k &  t \longrightarrow \infty &  \widetilde{W}_{\infty}^k
\end{array}.
\end{equation*}
Now, let $Q \in \mathscr{A}$ be another selfadjoint projection and assume $Q$ is $\star$-free with $(U_t)_{t \geq 0}$. Then, we may consider more generally the operator $PU_tQ$ and its radial part $PU_tQU_t^{\star}P$. Viewed as an operator in the compressed probability space, the latter defines the free Jacobi process associated with $(P,Q)$. Using free stochastic calculus, we shall prove that for any $t > 0$, the moment sequences of $W_t^k/k^2$ in $(\mathscr{A}, \tau)$ and of $PU_tQU_t^{\star}P$ in 
$(P\mathscr{A}P, k\tau)$ satisfy the same recurrence relation provided that the self-adjoint projections $P$ and $Q$ have common rank $1/k$. Of course, the initial values at $t=0$ of these moment sequences may be different in which case their corresponding spectral distributions will be different as well. On the other hand, it is straightforward to see that the moments of $W_t^k/k^2$ converge as $k \rightarrow \infty$ to $(e^{-nt})_{n \geq 0}$ for fixed time $t$, which contrasts the weak convergence of 
$W_{\infty}^k/k$ to the Marchenko-Pastur distribution. This contrast may be explained by the complicated structure of the $\star$-cumulants of $U_t$ in comparison with those of $U_{\infty}$ (\cite{DGPN}). 

Back to the case $P=Q$ (for sake of simplicity), the relation between the spectral distributions of $W_t^k/k^2$ and of $PU_tPU_t^{\star}P$ opens the way to compute the moments of the former by studying the latter. Indeed, for any $n \geq 1$, $\tau[(W_t^k)^n]$
is a linear combination of $k^{2n}$ factors of the form 
\begin{equation*}
\tau[U_t^{i_1}(U_t^{i_2})^{\star} \cdots (U_t^{i_{2m-1}})^{\star}\ U_t^{i_{2m}}], \quad 1 \leq m \leq n, \quad i_j \in \{1,\dots, k\}.
\end{equation*}
Apart from constant ones, those where any index $i_j$ occurs at most once may be computed using the L\'evy property of the free unitary Brownian motions. However, the contributions of the remaining factors may be only computed using the freeness property 
and as such, the complexity of $\tau[(W_t^k)^n]$ increase rapidly even for small orders. For that reason, we rather focus on the moments of $PU_tPU_t^{\star}P$ and our main result (Theorem \ref{Th2} below) establishes for any $n \geq 1$ a binomial-type expansion of 
\begin{equation*}
k\tau[(PU_tPU_t^{\star}P)^n]
\end{equation*}
as a linear combination of the moments
\begin{equation*}
\tau[(T_kU_tT_kU_t^{\star})^j], \quad 0 \leq j \leq n,
\end{equation*} 
where $T_k := kP - {\bf 1} = T_k^{\star}$ satisfies $\tau(T_k) = 0$. This expansion extends to any integer $k \geq 3$ the expansion proved in \cite{DHH} for $k=2$ for which $T_2 = 2P-1$ is unitary and self-adjoint, which in turn implies that $T_2U_tT_2U_t^{\star}$ is distributed as $U_{2t}$. However, for any $k \geq 3$, $T_k$ is not even normal: it is the sum of $(k-1)$ unitary operators and satisfies the relation 
\begin{equation*}
\tau[(T_k)^2]= (k-2)T_k + k-1. 
\end{equation*} 
Of course, the constant term of this expansion is nothing else but the $n$-th moment of the spectral distribution of $PU_{\infty}PU_{\infty}^{\star}P$ which may be written as a weighted sum of Catalan numbers.  Surprisingly, the higher order coefficients are still given by the binomial coefficients 
\begin{equation*}
\binom{2n}{n-j}, \quad 1 \leq j \leq n,
\end{equation*}
up to the multiplicative factors $(k-1)^{n-j}, 1 \leq j \leq n$. Our proof of the binomial-type expansion is enumerative and technical and it would be interesting to seek a combinatorial proof explaining the occurrence of the binomial coefficients above which form the so-called the Catalan triangle (\cite{Sha}). 

Once the binomial expansion derived, we turn it into a relation between the moment generating functions of $J_t$ and of $T_kU_tT_kU_t^{\star}$. Using the partial differential equation (hereafter pde) satisfied by the former (\cite{Dem}), we derive a pde for the latter. The characteristic curves of this pde seems out of reach for the time present and we postpone their analysis to a future research work. 

The paper is organised as follows. In the next section, we discuss the relation between the $\star$-moments of the free average of $k$ free unitary Brownian motion and those of the compression $PU_tP$ when $\tau(P) = 1/k$. There, we also prove that the moment sequences of the radial parts of the free average and of $PU_tQ$ satisfy the same recurrence relation and that their limits as $k \rightarrow \infty$ is the Dirac mass at $e^{-t}$. In the third section, we prove the binomial-type formula for the moments of $J_t$ then turn it into a relation between moment generating functions whence we deduce a pde for the moment generating function of $T_kU_tT_kU_t^{\star}$. We also include two appendices where we prove two formulas which we could not find in literature and which we think are of independent interest. The first formula has the merit to express the moments of the stationary distribution of the free Jacobi process corresponding to $\tau(P) = 1/k$ as a perturbation of those corresponding to $\tau(P) = 1/2$. In particular, it involves a family of polynomials with integer coefficients in the variable $(k-2)$ and its derivation relies on special properties of the Gauss hypergeometric function. As to the second formula, it concerns the free cumulants of a self-adjoint projection with arbitrary rank which we express as a difference of two Legendre polynomials.

\section{Relating $G^t_k$ and compressions of $U_t$}
\subsection{Warm up: $\star$-moments of compressions}. Given a collection of operators $(a_1, \dots, a_n)$ in a non commutative probability space $(\mathscr{A}, \tau)$, their joint distribution $\mu_{a_1, \dots, a_n}$ is the linear functional which assigns to any polynomial $P$ in $n$ non commuting indeterminates its trace $\tau(P(a_1, \dots, a_n))$. In this respect, the Nica-Speicher generalized $R$-transform (\cite{Nic-Spe96}) allows to relate the joint distribution of the compressed collection $(Pa_1P, \dots, Pa_nP)$ by a free self-adjoint projection $P$ in the compressed algebra to $\mu_{a_1, \dots, a_n}$. In particular, when $n=2$ and if $a_1 = a, a_2 = a^{\star}$ then $\mu_{a, a^{\star}}$ is given by all the $\star$-moments of $a$ and we shall simply refer to it as the distribution of $a$. The following result shows that if 
$\tau(P) = 1/k, k \geq 2,$ then the compression of $(U_t, U_t^{\star})$ by $P$ amounts to summing $k$ free copies of $(U_t, U_t^{\star})$ up to dilation. Though we expect that this result is known among the free probability community, we did not find it written anywhere and we include it here for the reader's convenience. Note also that it reduces to the Nica-Speicher convolution semi-group when $a$ is self-adjoint.  
\begin{pro}\label{Pr1}
	Let $P$ be a self-adjoint projection freely independent from $\{U_t,U_t^\star\}_{t \geq 0}$ with $\tau(P) = 1/k, k \geq 2$. 
	Then, the distribution of $PU_tP$ in $(P\mathscr{A}P, k\tau)$ coincides with that of $G_t^k/k$ in $(\mathscr{A}, \tau)$.  
\end{pro}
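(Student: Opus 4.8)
The plan is to establish the equality of the two $\star$-distributions by matching their free cumulants. Since a joint $\star$-distribution is uniquely determined by its free cumulants through the (invertible) moment--cumulant relations, and since $PU_tP$ together with its adjoint $PU_t^\star P = (PU_tP)^\star$, respectively $G_t^k/k$ together with $(G_t^k/k)^\star$, generate the relevant algebras, it suffices to show that for every $n \geq 1$ and every adjoint pattern $\varepsilon = (\varepsilon_1, \dots, \varepsilon_n) \in \{1, \star\}^n$ the mixed free cumulants
\[
\kappa_n\big((PU_tP)^{\varepsilon_1}, \dots, (PU_tP)^{\varepsilon_n}\big)
\quad\text{and}\quad
\kappa_n\big((G_t^k/k)^{\varepsilon_1}, \dots, (G_t^k/k)^{\varepsilon_n}\big)
\]
coincide, the former computed in $(P\mathscr{A}P, k\tau)$ and the latter in $(\mathscr{A}, \tau)$.

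First I would treat the free-sum side. Writing $G_t^k = U_t^1 + \dots + U_t^k$ with the $\star$-algebras generated by the $U_t^j$ mutually free, multilinearity of the free cumulants together with the vanishing of mixed cumulants of free variables leaves only the fully diagonal terms, so that
\[
\kappa_n\big((G_t^k)^{\varepsilon_1}, \dots, (G_t^k)^{\varepsilon_n}\big)
= \sum_{j=1}^k \kappa_n\big((U_t^j)^{\varepsilon_1}, \dots, (U_t^j)^{\varepsilon_n}\big)
= k\,\kappa_n\big(U_t^{\varepsilon_1}, \dots, U_t^{\varepsilon_n}\big),
\]
the last step using that the copies are identically distributed. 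Since scaling by $1/k$ multiplies the $n$-th cumulant by $k^{-n}$, this yields
\[
\kappa_n\big((G_t^k/k)^{\varepsilon_1}, \dots, (G_t^k/k)^{\varepsilon_n}\big)
= \frac{1}{k^{n-1}}\,\kappa_n\big(U_t^{\varepsilon_1}, \dots, U_t^{\varepsilon_n}\big).
\]

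On the compression side I would invoke the Nica--Speicher compression formula (\cite{Nic-Spe96}): for a projection $P$ freely independent from $\{U_t, U_t^\star\}$ with $\tau(P) = \alpha$, the free cumulants in $(P\mathscr{A}P, \alpha^{-1}\tau)$ satisfy $\kappa_n^{\alpha^{-1}\tau}(Pa_1P,\dots,Pa_nP) = \alpha^{n-1}\kappa_n^{\tau}(a_1,\dots,a_n)$ for $a_j \in \{U_t, U_t^\star\}$. Taking $\alpha = 1/k$, so that $\alpha^{-1}\tau = k\tau$, gives
\[
\kappa_n\big((PU_tP)^{\varepsilon_1}, \dots, (PU_tP)^{\varepsilon_n}\big)
= \Big(\tfrac{1}{k}\Big)^{n-1}\kappa_n\big(U_t^{\varepsilon_1}, \dots, U_t^{\varepsilon_n}\big),
\]
which is precisely the quantity found for $G_t^k/k$. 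As these mixed $\star$-cumulants agree for all $n$ and all patterns $\varepsilon$, the two distributions coincide, proving the proposition.

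The step requiring the most care is the compression formula in this non-self-adjoint setting. Its self-adjoint specialization is exactly the Nica--Speicher free convolution semigroup, with the $\alpha = 1/k$ compression realizing the $k$-fold operation, which is consistent with the comparison above; but here it must be applied to the full pair $(U_t, U_t^\star)$, and the crux is to pin down the scaling exponent $\alpha^{n-1}$ (and not its reciprocal). I would confirm this either by reproducing the relevant computation through the boxed convolution and generalized $R$-transform of \cite{Nic-Spe96}, or by a direct low-order sanity check fixing the normalization; it is exactly this exponent that forces the $1/k$ dilation of $G_t^k$, rather than $G_t^k$ itself, to be the correct counterpart of $PU_tP$.
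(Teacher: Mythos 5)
Your proof is correct and follows essentially the same route as the paper: the paper packages the mixed free $\star$-cumulants into the generalized $R$-transform of \cite{Nic-Spe96} and invokes its additivity under free sums together with Application 1.11 for the compression, which is exactly your cumulant-level computation (both sides reducing to $(1/k)^{n-1}\kappa_n(U_t^{\varepsilon_1},\dots,U_t^{\varepsilon_n})$) written as an identity of generating series. Your worry about the exponent is resolved as you suspect: the Nica--Speicher compression scaling is indeed $\alpha^{n-1}$, as the $n=1$ case $\kappa_1^{k\tau}(PU_tP)=k\,\tau(PU_tP)=\tau(U_t)$ already confirms.
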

\begin{proof}
	Given an operator $a \in \mathscr{A}$, let $R(\mu_{a,a^{\star}})$ be its generalized $R$-transform (\cite{Nic-Spe96}, section 3.9) and recall that it entirely determines the distribution of $a$. Then, one has on the one hand:
	\begin{equation*}
		R\left(\mu_{\frac{1}{k}U_t^1+\ldots+\frac{1}{k}U_t^k,(\frac{1}{k}U_t^1)^\star+\ldots+(\frac{1}{k}U_t^k)^\star}\right)=kR\left(\mu_{\frac{1}{k}U_t,\frac{1}{k}U_t^\star}\right)
	\end{equation*}
	due to the $\star$-freeness of $(U_t^j)_{j=1}^k$ (\cite{Nic-Spe96}).
On the other hand, \cite[Application 1.11]{Nic-Spe96} entails
	\begin{equation*}
		R\left(\mu_{PU_tP,PU_t^\star P}\right)= k R\left(\mu_{\frac{1}{k}U_t,\frac{1}{k}U_t^\star}\right),
	\end{equation*}
	where the distribution $\mu_{PU_tP,PU_t^\star P}$ is considered in the compressed space $(P\mathscr{A}P, k\tau)$.
\end{proof}	
The Brown measure of a non normal operator plays a key role in random matrix theory since it supplies a candidate for the limiting empirical distribution of a non normal matrix. 
In a tracial non commutative probability space, it is fully determined by $\star$-moments and one immediately deduces from the previous proposition that the Brown measures of $G_t^k/k$ and of $PU_tP$ coincide when $\tau(P) = 1/k$. In general, 
the description of the Brown measure of $PU_tP$ is a quite difficult problem: the main result proved in \cite{Dem-Ham1} already provides a Jordan domain containing its support. As a matter of fact, Proposition \ref{Pr1} offers another way to compute the Brown measure of $PU_tP$ in the particular case $\tau(P) = 1/k$ relying on operator-valued free probability as explained in \cite{BSS}. However, it turns out that the computations are already tedious even for $k=2$ and as such, we postpone them to a future research work. 

In the long-time regime $t \rightarrow \infty$, the fact that the $R$-diagonal operator $PU_{\infty}P$ and the average of $k$ free Haar unitaries share the same Brown measure is transparent from Haagerup-Laarsen results (\cite{Haa-Lar}, examples 5.3 and 5.5) though not being explicitly pointed out there. Indeed, this measure is radial and absolutely continuous with density given by (\cite{Haa-Lar}): 
\begin{equation*}
\frac{k-1}{\pi(1-|\lambda|^2)^2}{\bf 1}_{(0,1/\sqrt{k})}(|\lambda|) d\lambda,
	\end{equation*}
with respect to Lebesgue measure $d\lambda$. 

\subsection{Radial parts and beyond} \label{radial}
If we consider the radial parts of $PU_tP$ and of $G_t^k/k$, then the equality between the moments of $PU_tPU_t^{\star}P$ and of $W_t^k/k^2$ may be readily deduced from the moment-cumulant formula  for the compression by a free projection (see Theorem 14.10, \cite{Nic-Spe}). 
Actually, if $(a_1, \dots, a_m)$ is collection of operators in $\mathscr{A}$ which is free from $P$, then 
\begin{equation}\label{Comp}
\frac{1}{\tau(P)}\tau(Pa_{i_1}Pa_{i_2}P\dots Pa_{i_n}P) = \sum_{\pi \in NC(n)} \kappa_{\pi}[a_{i_1}, \dots, a_{i_n}] [\tau(P)]^{n-|\pi|},
\end{equation} 
for any indices $1 \leq i_1, \dots, i_n, \leq m$. Here $NC(n)$ is the lattice of non crossing partitions, $|\pi|$ is the number of blocks of the partition $\pi \in NC(n)$ and $\kappa_{\pi}$ is the multiplicative functional of free cumulants of blocks of $\pi$ 
(see Lectures 10 and 11 in \cite{Nic-Spe} for more details). Specializing \eqref{Comp} with $(a_{i_{2j+1}}, a_{i_{2j+2}}) = (U_t, U_t^{\star}), 0 \leq j \leq n-1,$ and $\tau(P) = 1/k$, we get: 
\begin{equation}\label{Comp1}
 k\tau(PU_tPU_t^{\star}P\dots PU_tPU_t^{\star}P) = \sum_{\pi \in NC(2n)} \kappa_{\pi}[\underbrace{U_t, U_t^{\star}\dots, U_t, U_t^{\star}}_{2n}] k^{|\pi|-2n}. 
\end{equation} 
On the other hand, the moment-cumulant formula (11.8) in \cite{Nic-Spe} entails: 
\begin{equation}\label{Comp2}
\frac{1}{k^{2n}}\tau[(W_t^k)^n] = \frac{1}{k^{2n}}\sum_{\pi \in NC(2n)} k_{\pi}[G_t, (G_t^k)^{\star}, \dots, G_t, (G_t^k)^{\star}]
\end{equation}
where we recall that $G_t^k = U_t^1+\dots+U_t^k$ and $W_t^k = G_t^k(G_t^k)^{\star}$. But if $V$ is a block of $\pi$ then $\kappa_V$ is the sum of terms of the form 
\begin{equation*}
\kappa_{|V|}[(U_t^{j_1})^{\epsilon(1)}, (U_t^{j_2})^{\epsilon(2)}, \dots, (U_t^{j_{2n}})^{\epsilon(2n)}], 
\end{equation*}
where $\epsilon(1), \dots, \epsilon(2n) \in \{1, \star\}$ and $1 \leq j_1, \dots, j_{2n} \leq k$. All these terms vanish due to the $\star$-freeness of $(U_t^j)_{j=1}^k$ except those of the form 
\begin{equation*}
\kappa_{|V|}[(U_t^{j})^{\epsilon(1)}, (U_t^{j})^{\epsilon(2)}, \dots, (U_t^j)^{\epsilon(2n)}], 
\end{equation*}
 for a single index $1 \leq j \leq k$. There are $k$ such terms and all give the same contribution 
 \begin{equation*}
\kappa_{|V|}[(U_t)^{\epsilon(1)}, (U_t^{j})^{\epsilon(2)}, \dots, (U_t)^{\epsilon(2n)}],
\end{equation*}
since $U_t^1, \dots, U_t^k$ have the same spectral distribution as $U_t$. Consequently, the RHS of \eqref{Comp2} and \eqref{Comp1} are equal. 

More generally, we shall prove below that given two orthogonal projections $P$ and $Q$ which are $\star$-free from $(U_t)_{t \geq 0}$, the moments of $PU_tQU_t^{\star}P$ and those of $W_t^k/k^2$ coincide  provided that $\tau(P) = \tau(Q) = 1/k$. Our main tool is free stochastic calculus and we refer to \cite{Bia-Spe} for further details on this calculus. To proceed, recall from \cite{Bia} the stochastic differential equation satisfied by the free unitary Brownian motion $(U_t)_{t \geq 0}$:
\begin{equation*}
dU_t = iU_t dX_t -\frac{U_t}{2} dt, \quad U_0 = {\bf 1},
\end{equation*}
where $(X_t)_{t \geq 0}$ is a free additive Brownian motion. Hence, there exists a $k$-tuple free additive Brownian motions $(X_t^j)_{t \geq 0}, 1 \leq j \leq k,$ which are free in $\mathscr{A}$ and such that 
\begin{equation}\label{FSDE}
dU_t^j = iU_t^j dX_t^j -\frac{U_t^j}{2} dt, \quad U_0^j = {\bf 1}.
\end{equation}
With the help of the free It\^o formula (\cite{Bia}), we shall prove: 
\begin{teo}
For any $n \geq 1, t > 0$, set\footnote{We omit the dependence on $k$ for sake of clarity.}
\begin{equation*}
s_n(t) := \tau[(W_t^k)^n]. 
\end{equation*}
Then, 
\begin{equation}\label{Moments1}
\partial_t s_n(t) = -ns_n(t) +nks_{n-1}(t) +nk\sum_{j=0}^{n-2}s_{n-j-1}(t)s_j(t) - \frac{n}{k}\sum_{j=0}^{n-2}s_{n-j-1}(t)s_{j+1}(t),
\end{equation}
where an empty sum is zero. 
\end{teo}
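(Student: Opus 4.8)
The plan is to apply free stochastic calculus directly to $W_t^k = G_t^k(G_t^k)^\star$, where by \eqref{FSDE} the sum $G_t^k = \sum_{j=1}^k U_t^j$ solves $dG_t^k = i\sum_j U_t^j\,dX_t^j - \tfrac12 G_t^k\,dt$. Abbreviating $W=W_t^k$, $G=G_t^k$ and writing $dM_t := i\sum_j U_t^j\,dX_t^j$ for the martingale part of $dG$, I would first compute $dW$ from the product rule $d(GG^\star) = (dG)G^\star + G\,(dG^\star) + (dG)(dG^\star)$. The only free It\^o input is the covariation rule $dX_t^i\,A\,dX_t^j = \delta_{ij}\tau(A)\,dt\,\mathbf{1}$ for adapted $A$, which uses the freeness of the $X^j$. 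This gives at once $(dG)(dG^\star) = dM_t\,dM_t^\star = k\,dt\,\mathbf{1}$ (since $U^j(U^j)^\star = \mathbf{1}$), so that $dW = dN + (k\mathbf{1} - W)\,dt$ with martingale part $dN := dM_t\,G^\star + G\,dM_t^\star$.

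Next I would expand $d(W^n)$ by the free It\^o formula. The first-order (Leibniz) part $\sum_{i=1}^n W^{i-1}(dW)W^{n-i}$ contributes, under $\tau$ and after substituting the drift $k\mathbf{1}-W$, exactly $n\bigl(k s_{n-1}(t) - s_n(t)\bigr)\,dt$, which is the first two terms of \eqref{Moments1}. The second-order part is $\sum_{1\le i<j\le n} W^{i-1}(dN)W^{j-i-1}(dN)W^{n-j}$; under the trace and by cyclicity, each pair reduces to a term $\tau[(dN)W^a(dN)W^{b}]$ with $a+b=n-2$, where the middle exponent $a=j-i-1$ occurs with multiplicity $n-1-a$.

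The heart of the argument is to evaluate $Q_a := \tau[(dN)W^a(dN)W^{b}]/dt$. Substituting $dN = dM\,G^\star + G\,dM^\star$ produces four terms. The two \emph{mixed} terms, carrying one $dM$ and one $dM^\star$, are clean: the rules $dM\,Y\,dM^\star = k\tau(Y)\,dt\,\mathbf{1}$ and $dM^\star\,Y\,dM = k\tau(Y)\,dt\,\mathbf{1}$ give the contributions $k\,s_{a+1}s_b$ and $k\,s_a s_{b+1}$. The two \emph{pure} terms, carrying $dM\cdots dM$ or $dM^\star\cdots dM^\star$, are the main obstacle: they yield expressions $\sum_{j=1}^k \tau(\alpha U_t^j)\tau(U_t^j\beta)$ with $\alpha,\beta$ symmetric words in $\{U_t^m,(U_t^m)^\star\}$. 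The key observation is that the free copies $U_t^1,\dots,U_t^k$ are exchangeable, i.e.\ there is a trace-preserving automorphism of $\mathscr{A}$ permuting them, so $\tau(\alpha U_t^j)$ is independent of $j$; hence $\sum_j \tau(\alpha U_t^j)\tau(U_t^j\beta) = \tfrac1k\,\tau(\alpha G)\tau(G\beta) = \tfrac1k\,s_{a+1}s_{b+1}$. This is precisely where the factor $1/k$ in \eqref{Moments1} originates. Collecting the four pieces gives $Q_a = k\,s_{a+1}s_b + k\,s_a s_{b+1} - \tfrac2k\,s_{a+1}s_{b+1}$.

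Finally I would sum $\sum_{a=0}^{n-2}(n-1-a)Q_a$ and match it to the last two sums of \eqref{Moments1}. Since the multiplicities $n-1-a$ are not constant, a reflection argument is needed: after reindexing, the coefficient of a product $s_m s_{n-1-m}$ (resp.\ $s_m s_{n-m}$) depends on $m$, but because $s_m s_{n-1-m}=s_{n-1-m}s_m$ the difference between this coefficient and the desired constant one is antisymmetric under $m\mapsto n-1-m$ (resp.\ $m\mapsto n-m$) and therefore sums to zero over the symmetric index range. This upgrades the non-uniform multiplicities to the uniform factor $n$, producing $nk\sum_{j=0}^{n-2}s_{n-j-1}s_j$ and $-\tfrac nk\sum_{j=0}^{n-2}s_{n-j-1}s_{j+1}$ and completing the proof. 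The two steps demanding real care are the exchangeability collapse that yields the $1/k$ coefficient and this final antisymmetry cancellation; everything else is bookkeeping of the free It\^o table.
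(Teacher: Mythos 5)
Your proposal is correct and follows essentially the same route as the paper: the same It\^o decomposition of $dW_t^k$ with drift $(k\mathbf{1}-W_t^k)\,dt$, the same splitting of the second-order terms into mixed and pure contributions, the same exchangeability collapse $\sum_j \tau(\alpha U_t^j)\tau(U_t^j\beta)=\frac1k\tau(\alpha G_t^k)\tau(G_t^k\beta)$ producing the $1/k$ factor, and the same reflection $a\mapsto n-2-a$ to uniformize the multiplicities $n-1-a$ into the factor $n$. Your $Q_a=k\,s_{a+1}s_b+k\,s_as_{b+1}-\tfrac2k\,s_{a+1}s_{b+1}$ matches the paper's three sums exactly, so the two arguments differ only in notation (free It\^o table versus the bi-process formalism of Biane--Speicher).
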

\begin{proof}
Using \eqref{FSDE}, we get
\begin{equation*}
dG_t^k = i\sum_{j=1}^kU_t^jdX_t^j - \frac{G_t^k}{2} dt
\end{equation*}
whence 
\begin{align*}
dW_t^k &= d[G_t^k(G_t^k)^{\star}] 
 = dG_t^k (G_t^k)^{\star} + G_t^k (dG_t^k)^{\star} + (dG_t^k)((dG_t^k)^{\star}),
\end{align*}
where $(dG_t^k)((dG_t^k)^{\star})$ is the bracket of the semimartingales $dG_t^k$ and $(dG_t^k)^{\star}$. Since $(X_t^j)_{t \geq 0}$ are assumed free then 
\begin{equation*} 
(dX_t^j)(dX_t^m) = \delta_{jm} dt, \quad 1 \leq j,m \leq k,
\end{equation*}
so that 
\begin{equation*}
dW_t^k = \sum_{j=1}^k\left[(iU_t^j)dX_t^j(G_t^k)^{\star} + G_tdX_t^j(iU_t^j)^{\star}\right] + (k-W_t^k) dt. 
\end{equation*}
Now, borrowing the terminology and the notations of \cite{Bia-Spe}, we introduce the bi-processes: 
\begin{equation*}
F_t^j:= (iU_t^j) \otimes (G_t^k)^{\star} + (G_t^k)\otimes (iU_t^j)^{\star}, \quad 1 \leq j \leq k,
\end{equation*}
and write: 
\begin{equation*}
dW_t^k = \sum_{j=1}^kF_t^j \sharp dX_t^j + (k-W_t^k) dt. 
\end{equation*}
Consequently, for any $n \geq 1$, Proposition 4.3.2 in \cite{Bia-Spe} entails: 
\begin{align*}
d[(W_t^k)^n] &= \textrm{Martingale part} + \sum_{j=0}^{n-1}(W_t^k)^j \otimes (W_t^k)^{n-1-j} \sharp (k-W_t^k) dt  
\\& - \sum_{j=1}^k\sum_{\substack{m,l \geq 0 \\ m+l \leq n-2}}(W_t^k)^{l}U_t^j(G_t^k)^{\star}(W_t^k)^{n-m-l-2} \tau[(W_t^k)^{m}U_t^j(G_t^k)^{\star}]dt 
\\& -  \sum_{j=1}^k\sum_{\substack{m,l \geq 0 \\ m+l \leq n-2}}(W_t^k)^{l}G_t^k(U_t^j)^{\star}(W_t^k)^{n-m-l-2} \tau[(W_t^k)^{m}G_t^k(U_t^j)^{\star}]dt 
\\&+  \sum_{j=1}^k\sum_{\substack{m,l \geq 0 \\ m+l \leq n-2}}\left\{(W_t^k)^{n-m-2} \tau[(W_t^k)^{m+1}] + (W_t^k)^{n-m-1} \tau[(W_t^k)^{m}]\right\}.
\end{align*}
Taking the expectation with respect to $\tau$ of both sides and differentiating with respect to the variable $t$\footnote{The state $\tau$ is tracial and all the processes are continuous in the strong topology.}, we get: 
\begin{align*}
\partial_t s_n(t) &= -ns_n(t) + nk s_{n-1}(t) 
\\& - \sum_{j=1}^k\sum_{\substack{m,l \geq 0 \\ m+l \leq n-2}}\tau[(W_t^k)^{n-m-2}U_t^j(G_t^k)^{\star}] \tau[(W_t^k)^{m}U_t^j(G_t^k)^{\star}]
\\& -  \sum_{j=1}^k\sum_{\substack{m,l \geq 0 \\ m+l \leq n-2}}\tau[(W_t^k)^{n-m-2}G_t^k(U_t^j)^{\star}] \tau[(W_t^k)^{m}G_t^k(U_t^j)^{\star}]
\\&+  \sum_{j=1}^k\sum_{\substack{m,l \geq 0 \\ m+l \leq n-2}}\left\{\tau[(W_t^k)^{n-m-2}] \tau[(W_t^k)^{m+1}] + \tau[(W_t^k)^{n-m-1}] \tau[(W_t^k)^{m}]\right\}. 
\end{align*}
The last (triple) sum yields the following contribution (the summands there do not depend on the indices $j,l$): 
\begin{multline*}
k\sum_{m=0}^{n-2}(n-m-1) \tau[(W_t^k)^{n-m-2}] \tau[(W_t^k)^{m+1}] + k\sum_{m=0}^{n-2}(n-m-1) \tau[(W_t^k)^{n-m-1}] \tau[(W_t^k)^{m}] \\ 
= nk\sum_{m=0}^{n-2}\tau[(W_t^k)^{n-m-1}] \tau[(W_t^k)^{m}],
\end{multline*}
where the last equality follows from the index change $m \mapsto n-m-2$. Finally, the summands 
\begin{equation*}
\tau[(W_t^k)^{n-m-2}U_t^j(G_t^k)^{\star}] \tau[(W_t^k)^{m}U_t^j(G_t^k)^{\star}], \quad 1 \leq j \leq k,
\end{equation*}
do not depend on $j$ since $W_t^k$ and $G_t^k$ are symmetric (invariant under permutations) and since the unitary operators $U_t^j, 1 \leq j \leq k,$ are free and have identical distributions. As a result, 
\begin{align*}
S_1:&=\sum_{j=1}^k\sum_{\substack{m,l \geq 0 \\ m+l \leq n-2}}\tau[(W_t^k)^{n-m-2}U_t^j(G_t^k)^{\star}] \tau[(W_t^k)^{m}U_t^j(G_t^k)^{\star}]dt
\\& = \sum_{j=1}^k\sum_{m=0}^{n-2}(n-m-1)\tau[(W_t^k)^{n-m-2}U_t^j(G_t^k)^{\star}] \tau[(W_t^k)^{m}U_t^j(G_t^k)^{\star}]dt 
\\& = \frac{1}{k}\sum_{m=0}^{n-2}(n-m-1)\sum_{j,l=1}^k )\tau[(W_t^k)^{n-m-2}U_t^j(G_t^k)^{\star}] \tau[(W_t^k)^{m}U_t^l(G_t^k)^{\star}]dt 
\\& = \frac{1}{k}\sum_{m=0}^{n-2}(n-m-1) \tau[(W_t^k)^{n-m-1}] \tau[(W_t^k)^{m+1}].
\end{align*}  
Similarly, 
\begin{align*}
S_2: &= \sum_{j=1}^k\sum_{\substack{m,l \geq 0 \\ m+l \leq n-2}}\tau[(W_t^k)^{n-m-1}G_t^k(U_t^j)^{\star}] \tau[(W_t^k)^{m}G_t^k(U_t^j)^{\star}]dt 
\\& = \frac{1}{k}\sum_{m=0}^{n-2}(n-m-1) \tau[(W_t^k)^{n-m-1}] \tau[(W_t^k)^{m+1}].
\end{align*}
Performing the index change $m \mapsto n-m-2$ in $S_2$, we end up with: 
\begin{equation*}
S_1+S_2 = \frac{n}{k}\sum_{m=0}^{n-2}\tau[(W_t^k)^{n-m-1}] \tau[(W_t^k)^{m+1}].
\end{equation*}
Gathering all the contributions above, we obtain \eqref{Moments1}.
\end{proof}

Setting $r_n(t) := s_n(t)/k^{2n} = \tau[(W_t^k/k^2)^{n}]$, we readily infer from \eqref{Moments1}: 
\begin{cor}
For any $n \geq 1$, 
\begin{equation}\label{Moments2}
\partial_t r_n(t) = -nr_n(t) +\frac{n}{k}r_{n-1}(t) +\frac{n}{k}\sum_{j=0}^{n-2}r_{n-j-1}(t) [r_j(t) - r_{j+1}(t)].
\end{equation}
\end{cor}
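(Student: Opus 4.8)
The plan is to obtain \eqref{Moments2} by the direct substitution $s_n(t) = k^{2n} r_n(t)$ into the recurrence \eqref{Moments1} of the preceding Theorem, followed by division of the resulting identity by $k^{2n}$. Since this is a corollary of the Theorem, the entire content is the bookkeeping of the powers of $k$ carried by each term, and I expect no genuine obstacle beyond keeping that bookkeeping straight. I would first rewrite the left-hand side: dividing $\partial_t s_n(t)$ by the constant $k^{2n}$ gives exactly $\partial_t r_n(t)$, since $k$ does not depend on $t$.

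Next I would treat the right-hand side of \eqref{Moments1} term by term. The linear term $-n s_n(t)$ divided by $k^{2n}$ produces $-n r_n(t)$. The term $nk\, s_{n-1}(t)$ becomes, after using $s_{n-1} = k^{2n-2} r_{n-1}$, the expression $nk \cdot k^{2n-2} r_{n-1}/k^{2n} = (n/k) r_{n-1}$, which accounts for the second summand. The two convolution sums are the only place where the normalization $r_n = s_n/k^{2n}$ shows its effect: I would insert $s_{n-j-1} = k^{2(n-j-1)} r_{n-j-1}$ together with $s_j = k^{2j} r_j$ in the first sum and $s_{j+1} = k^{2(j+1)} r_{j+1}$ in the second.

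In the first sum the two exponents add to $2n-2$, so the prefactor $nk$ contributes $nk \cdot k^{2n-2}/k^{2n} = n/k$, giving $(n/k)\sum_{j} r_{n-j-1} r_j$. In the second sum the exponents add to $2n$ instead, so after dividing by $k^{2n}$ the prefactor $-n/k$ is left unchanged, giving $-(n/k)\sum_{j} r_{n-j-1} r_{j+1}$. Collecting these four contributions and factoring $(n/k) r_{n-j-1}$ out of the difference of the two sums yields precisely \eqref{Moments2}. The only care needed is the observation that the two sums carry different total powers of $k$ — namely $2n-2$ against $2n$ — which is exactly what produces matching prefactors after rescaling and lets the cross terms combine into the single bracket $[r_j(t) - r_{j+1}(t)]$.
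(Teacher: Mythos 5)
Your proposal is correct and coincides with the paper's (unwritten) argument: the paper simply states that \eqref{Moments2} is ``readily inferred'' from \eqref{Moments1} via the substitution $r_n(t)=s_n(t)/k^{2n}$, which is exactly the bookkeeping you carry out. The power count ($k^{2n-2}$ in the first convolution sum versus $k^{2n}$ in the second) is right, and the four contributions combine as you say.
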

The moment relation \eqref{Moments2} is an instance of the one derived in Corollary 6.1 in \cite{Dem}. More precisely, let 
\begin{equation*}
J_t ;= PU_tQU_t^{\star}P
\end{equation*}
be the free Jacobi process associated with the self-adjoint projections $(P,Q)$. Viewed as an operator in the compressed algebra $(P\mathscr{A}P, \tau/\tau(P))$, its moments 
\begin{equation*}
m_n(t) = \frac{\tau(J_t^n)}{\tau(P)}, \quad n \geq 1, \quad m_0(t) = 1, 
\end{equation*}
satisfy the following differential system: 
\begin{equation}\label{Moments3}
\partial_t m_n(t) = -nm_n(t) + n\theta m_{n-1}(t) + n\lambda \theta \sum_{j=0}^{n-2}m_{n-j-1}(t) [m_j(t) - m_{j+1}(t)],
\end{equation}
where $\tau(P) = \lambda \theta \in (0,1], \tau(Q) = \theta \in (0,1]$. Consequently, if $\lambda = 1, \theta = 1/k$ then \eqref{Moments2} and \eqref{Moments3} coincide and in turn both sequences coincide provided that $m_n(0) = r_n(0)$ for all $n \geq 0$. 

\subsection{Limit as $k \rightarrow \infty$}
Let $U_{\infty} \in \mathscr{A}$ be a Haar unitary operator and assume that $U_{\infty}$ is free with $\{P,Q\}$. If $\tau(P) = \tau(Q) = 1/k$ then the spectral distribution of 
\begin{equation*}
J_{\infty}:= PU_{\infty}QU_{\infty}^{\star}P
\end{equation*}
in the compressed algebra $(P\mathscr{A}P, \tau/\tau(P))$ is given by (see e.g. \cite{Dem}, p. 130):
\begin{equation*}
\tilde{\mu}_{\infty}^k(dx) = \frac{1}{2\pi} \frac{\sqrt{4(k-1)x - k^2x^2}}{x(1 -x)}{\bf 1}_{[0,4(k-1)/k^2]}(x) dx, 
\end{equation*}  
Its pushforward under the dilation $x \mapsto kx$ is readily computed as 
\begin{equation*}
\mu_{\infty}^k(dx) = \frac{1}{2\pi} \frac{\sqrt{4k(k-1)x - k^2x^2}}{kx - x^2}{\bf 1}_{[0,4(k-1)/k]}(x) dx, 
\end{equation*}  
and converges weakly to the Marchenko-Pastur distribution of parameter one (\cite{CNPZ}): 
\begin{equation*}
\nu_{MP}(du) := \frac{1}{2\pi}\sqrt{\frac{4-u}{u}}{\bf 1}_{[0,4]}(u) du.  
\end{equation*}
IF $W_{\infty}^k = (U_{\infty}^1+\dots + U_{\infty}^k)(U_{\infty}^1+\dots + U_{\infty}^k)^{\star}$ then we can rephrase the weak convergence above as follows: for any $n \geq 0$
\begin{equation}\label{DLimit}
\lim_{k \rightarrow \infty}\lim_{t \rightarrow \infty} \frac{\tau[(W_t^k)^n]}{k^n} = \lim_{k \rightarrow \infty} \frac{\tau[(W_{\infty}^k)^n]}{k^n} = \int_0^1u^n \nu_{MP}(du) = \frac{4^n(1/2)_n}{(n+1)!}.
\end{equation}
The normalization by $k^n$ may be guessed from the moment-cumulant expansion:
\begin{equation*}
\tau[(W_{\infty}^k)^n] = \sum_{\pi \in NC(2n)} \kappa_{\pi}[\underbrace{U_{\infty}, U_{\infty}^{\star}\dots, U_{\infty}, U_{\infty}^{\star}}_{2n}] k^{|\pi|},
\end{equation*}
since partitions $\pi \in NC(2n)$ with more than $(n+1)$ blocks have zero contribution. Indeed, in any such partition, at least one block admits an odd number of elements in which case the corresponding free $\star$-cumulant vanishes (see \cite{Nic-Spe}, Proposition 15.1). 

For fixed time $t > 0$, the situation becomes different since the free $\star$-cumulants of $U_t$ admit a considerably more complicated structure compared with those of $U_{\infty}$ (\cite{DGPN}). In this respect, we can prove the following limiting result under the stronger normalization $k^2$, which shows that reversing the order of the $(k,t)$ limits in \eqref{DLimit} does not lead to a finite limit.

\begin{pro}
For any $n \geq 0, t \geq 0$: 
\begin{equation*}
\lim_{k \rightarrow \infty} \tau\left(\frac{(W_t^k)^n}{k^{2n}}\right) = \left[\tau(U_t)\right]^{2n} = e^{-nt}.
\end{equation*}
In particular, the free Jacobi process $(PU_tPU_t^{\star}P)_{t \geq 0}$ with $\tau(P) = 1/k$ converges weakly as $k \rightarrow \infty$ to the constant $e^{-t}$ in the compressed algebra.  
\end{pro}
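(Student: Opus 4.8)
The plan is to work directly with the moment recurrence \eqref{Moments2}, combined with a uniform-in-$k$ a priori bound, and to extract the limit by Duhamel's formula rather than by analysing the full moment sequence.

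First I would record two elementary facts. Since each $U_t^j$ is unitary, the triangle inequality gives $\|G_t^k/k\| = \|(1/k)\sum_{j=1}^k U_t^j\| \leq 1$, so the positive operator $W_t^k/k^2 = (G_t^k/k)(G_t^k/k)^{\star}$ has operator norm at most $1$; as $\tau$ is a state this yields the uniform bound $0 \leq r_n(t) \leq 1$ for every $n \geq 0$, $t \geq 0$ and every $k \geq 2$. Second, since $U_0^j = {\bf 1}$ we have $G_0^k = k{\bf 1}$, hence $W_0^k = k^2{\bf 1}$ and therefore $r_n(0) = 1$ for all $n$. These are the only inputs beyond \eqref{Moments2} that the argument needs.

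Next I would rewrite \eqref{Moments2} in integral form. Setting
\[
R_n(t) := n\, r_{n-1}(t) + n\sum_{j=0}^{n-2} r_{n-j-1}(t)\bigl[r_j(t) - r_{j+1}(t)\bigr],
\]
the recurrence reads $\partial_t r_n = -n\, r_n + R_n/k$, so that $\tfrac{d}{dt}\bigl(e^{nt} r_n\bigr) = e^{nt} R_n/k$. Integrating from $0$ to $t$ and using $r_n(0)=1$ gives
\[
r_n(t) = e^{-nt} + \frac{e^{-nt}}{k}\int_0^t e^{ns} R_n(s)\, ds.
\]
The uniform bound $0 \leq r_m \leq 1$ forces $|R_n(s)| \leq n + 2n(n-1) = n(2n-1)$, whence
\[
\bigl|r_n(t) - e^{-nt}\bigr| \leq \frac{e^{-nt}}{k}\, n(2n-1)\int_0^t e^{ns}\, ds = \frac{(2n-1)\bigl(1-e^{-nt}\bigr)}{k} \leq \frac{2n-1}{k}.
\]
Letting $k \to \infty$ with $n$ and $t$ fixed yields $\lim_{k\to\infty} r_n(t) = e^{-nt}$; the case $n=0$ is trivial since $r_0 \equiv 1$. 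Since $r_n(t) = \tau\bigl[(W_t^k)^n/k^{2n}\bigr]$, this is precisely the claimed limit $e^{-nt} = [\tau(U_t)]^{2n}$.

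Finally, since the numbers $e^{-nt} = \int x^n\, \delta_{e^{-t}}(dx)$ are the moments of the Dirac mass at $e^{-t}$, a compactly supported and hence moment-determinate measure, the convergence of all moments upgrades to weak convergence of the spectral distribution of $W_t^k/k^2$ (equivalently, of $PU_tPU_t^{\star}P$ in the compressed algebra $(P\mathscr{A}P, k\tau)$) to $\delta_{e^{-t}}$, giving the asserted weak convergence to the constant $e^{-t}$. The only point genuinely requiring care is the a priori bound $r_n(t)\leq 1$ uniform in both $k$ and $t$: it is exactly what renders the $1/k$-terms in \eqref{Moments2} negligible and collapses the problem into the one-line Duhamel estimate above. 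Without it one would be forced to control the entire moment sequence simultaneously, for instance by induction on $n$ against the limiting system $\partial_t \rho_n = -n\rho_n$, $\rho_n(0)=1$; the operator-norm bound makes this unnecessary and even furnishes the explicit rate $O(1/k)$.
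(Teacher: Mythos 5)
Your argument is correct, but it takes a genuinely different route from the paper's. The paper proves this proposition combinatorially: it reads the limit off the moment--cumulant expansion \eqref{Comp1}, observing that the factor $k^{|\pi|-2n}$ kills every noncrossing partition except the all-singletons one, whose contribution is $\kappa_1(U_t)^n\kappa_1(U_t^{\star})^n=[\tau(U_t)]^{2n}=e^{-nt}$; this is a two-line proof once \eqref{Comp1} is in place, and it explains structurally why the limit is a power of $\tau(U_t)$ (only the first free $\star$-cumulants survive). You instead work from the differential system \eqref{Moments2} obtained by free stochastic calculus, isolating the $O(1/k)$ perturbation $R_n$, and close the estimate with the a priori bound $0\le r_n\le 1$ (which follows correctly from $\|G_t^k/k\|\le 1$ and $\tau$ being a state) together with the initial condition $r_n(0)=1$ coming from $U_0^j={\bf 1}$ in \eqref{FSDE}; Duhamel then gives $|r_n(t)-e^{-nt}|\le (2n-1)/k$. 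Your approach costs a little more bookkeeping but buys two things the paper's proof does not make explicit: a quantitative rate $O(1/k)$ uniform in $t$, and independence from the cumulant machinery (you never need to know that odd or mixed $\star$-cumulants vanish, only the uniform norm bound). The final upgrade from moment convergence to weak convergence via moment-determinacy of the compactly supported measure $\delta_{e^{-t}}$ is also correct and is left implicit in the paper. One cosmetic caveat: you should note that $r_n$ is continuous at $t=0$ (the free unitary Brownian motion is norm-continuous in $t$) so that integrating the ODE from $0$ is legitimate; this is harmless but worth a word.
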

\begin{proof}
From \eqref{Comp1}, we readily see that the limit as $k \rightarrow \infty$ of $k\tau(J_t^n)$ is given by the (non crossing) partition with $2n$ blocks. Therefore, 
 \begin{equation*}
\lim_{k \rightarrow \infty} \tau\left(\frac{(W_t^k)^n}{k^{2n}}\right) = \underbrace{c_1(U_t)c_1(U_t^{\star})\dots c_1(U_t)c_1(U_t^{\star})}_{2n \,\, \textrm{terms}}.
\end{equation*}
Since $c_1(U_t) = c_1(U_t^{\star}) = \tau(U_t) = e^{-t/2}$ (see e.g. \cite{DHH} and references therein), the proposition follows. 
\end{proof}

\section{Analysis of the moments of the free Jacobi process}
\subsection{Moments binomial-type formula}
For sake of simplicity, we restrict our study from now on to the free Jacobi process associated with a single projection $P$. Recall from \cite{DHH} that when $\tau(P) = 1/2$, the moments of the free Jacobi process are linear combinations of those of $U_{2t}$. Indeed, it was observed there that 
\begin{equation}\label{Expk2}
2\tau(J_t^n) = \frac{1}{2^{2n}} \binom{2n}{n} + \frac{2}{2^{2n}} \sum_{j=1}^n \binom{2n}{n-j} \tau[(SU_tSU_t^{\star})^j],
\end{equation}
where $S = 2P-{\bf 1}$ satisfies $S = S^{\star} = S^{-1}$. 
Moreover, Lemme 3.8 in \cite{Haa-Lar} together with the semi-group property of $(U_t)_{t \geq 0}$ show that the spectral distributions of $SU_tSU_t^{\star}$ and of $U_{2t}$ coincide. More generally,   write:
\begin{equation*}
P = \frac{1}{k}\sum_{j=0}^{k-1}S_{j,k}, \quad S_{j,k}:= e^{2i\pi j({\bf 1}-P)/k}. 
\end{equation*}
Then $S_{j,k}$ is a unitary operator satisfying $(S_{j,k})^k = {\bf 1}$ and 
\begin{equation*}
S_{j,k} = {\bf 1} + (\omega_{j,k} - 1)({\bf 1}-P),
\end{equation*} 
where $\omega_{j,k} = e^{2i\pi j/k}$ is the $k$-th root of unity. Set 
\begin{equation*}
T_k := kP-{\bf 1} = \sum_{j=1}^{k-1}S_{j,k}. 
\end{equation*}
Then $T_k$ is self-adjoint and $(T_k)^2 = k(k-2)P + {\bf 1} = (k-2)T_k + (k-1){\bf 1}$. In this respect, we shall prove the following generalization of \eqref{Expk2}: 
\begin{teo}\label{Th2}
For any $k \geq 2$ and any $n \geq 1$, 
\begin{equation*}
m_n(t) = k \tau(J_t) = m_n(\infty) + \frac{k}{k^{2n}}\sum_{j=1}^n (k-1)^{n-j} \binom{2n}{n-j} \tau[(T_kU_tT_kU_t^{\star})^j],
\end{equation*}
where $m_n(\infty)$ is the $n$-th moment of $J_{\infty}$ in $(P\mathscr{A}P, k\tau)$, given by \eqref{MomStat1} and \eqref{MomStat2}. 
\end{teo}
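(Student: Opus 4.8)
The plan is to convert the computation of $m_n(t)=k\tau(J_t^n)$ into a purely algebraic trace identity and then to read off the binomial coefficients from a weighted enumeration. First I would use the traciality of $\tau$ together with $P^2=P$ to write $\tau(J_t^n)=\tau[(PU_tPU_t^{\star})^n]$, and substitute $P=(T_k+{\bf 1})/k$ in each of the $2n$ occurrences of $P$. Since $U_t(T_k+{\bf 1})U_t^{\star}=U_tT_kU_t^{\star}+{\bf 1}$, setting $A:=T_k$ and $B:=U_tT_kU_t^{\star}$ gives
\begin{equation*}
m_n(t)=\frac{k}{k^{2n}}\,\tau\bigl[\,((A+{\bf 1})(B+{\bf 1}))^n\,\bigr],\qquad AB=T_kU_tT_kU_t^{\star}.
\end{equation*}
Both $A$ and $B$ are self-adjoint, centered ($\tau(A)=\tau(B)=0$, the second by conjugation invariance of $\tau$), and satisfy the same quadratic relation $A^2=(k-2)A+(k-1){\bf 1}$ and $B^2=(k-2)B+(k-1){\bf 1}$ (the latter obtained by conjugating $T_k^2$ and using that $U_t$ is unitary). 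The whole problem thus becomes expressing $\tau[((A+{\bf 1})(B+{\bf 1}))^n]$ as a linear combination of the traces $\tau[(AB)^j]=\tau[(T_kU_tT_kU_t^{\star})^j]$, $0\le j\le n$.

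For the reduction itself I would expand $(A+{\bf 1})(B+{\bf 1})=AB+A+B+{\bf 1}$ in each of the $n$ blocks, producing a sum over $2^{2n}$ configurations that record, at each of the $2n$ alternating $A$/$B$ sites, whether the site is occupied (contributes $A$ or $B$) or empty (contributes ${\bf 1}$). Every resulting word collapses to a scalar multiple of some $\tau[(AB)^j]$ using only three ingredients: deleting empty sites fuses their two neighbours; each fused pair of equal letters is rewritten by $X^2=(k-2)X+(k-1){\bf 1}$, which splits the term into one keeping the letter (weight $k-2$) and a shorter one dropping it (weight $k-1$); and $\tau(A)=\tau(B)=0$ together with cyclicity kill any leftover isolated letter. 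It is worth stressing that no freeness is needed at this stage. I would encode this branching reduction as a weighted path (or recursion) whose state is the current reduced alternating word, tracked through its length and last letter: appending a letter that lengthens the word carries weight $1$, while a fusion branches with the two weights $(k-2)$ and $(k-1)$, and closing up cyclically sends an even length $2j$ to $\tau[(AB)^j]$ while reducing an odd length by one further fusion.

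The third step, which I expect to be the main obstacle, is the exact enumeration: I must show that the total weight of all configurations whose reduced word closes to $\tau[(AB)^j]$ equals $(k-1)^{n-j}\binom{2n}{n-j}$ for $1\le j\le n$. This is delicate because, as the small cases already reveal, the prefactor $(k-1)^{n-j}$ is not produced by a single family of steps but only emerges after summing the weight-$1$ and weight-$(k-2)$ contributions of many configurations; for $n=2$, $j=1$, for instance, one finds $4+4(k-2)=4(k-1)=(k-1)\binom{4}{1}$. I would therefore extract from the path description a recursion in $n$ for the coefficients $c_{n,j}$ of $\tau[(AB)^j]$ and verify that it matches, together with the initial data, the recursion satisfied by the Catalan-triangle entries $(k-1)^{n-j}\binom{2n}{n-j}$. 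Controlling how the branching weights $(k-2)$ and $(k-1)$ recombine across all configurations is precisely the enumerative bottleneck flagged in the introduction, and it is where an induction on $n$ (or a generating-function argument) would do the work.

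Finally, I would identify the constant term with $m_n(\infty)$. The algebraic expansion above is equally valid with $U_t$ replaced by the Haar unitary $U_\infty$; but $T_k$ is $\star$-free from $U_\infty$, so each $\tau[(T_kU_\infty T_kU_\infty^{\star})^j]$ with $j\ge 1$ is the trace of an alternating product of centered free elements and therefore vanishes. Evaluating the expansion at $t=\infty$ then forces $\tfrac{k}{k^{2n}}c_{n,0}=m_n(\infty)$, the $n$-th moment of $J_\infty$ given by \eqref{MomStat1} and \eqref{MomStat2}. Substituting this back produces exactly the stated formula, with the sum running from $j=1$ to $n$ and the constant term replaced by $m_n(\infty)$.
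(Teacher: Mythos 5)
Your setup is exactly the paper's: writing $m_n(t)=\frac{k}{k^{2n}}\tau\bigl[\bigl(({\bf 1}+T_k)({\bf 1}+U_tT_kU_t^{\star})\bigr)^n\bigr]$, exploiting the common quadratic relation $X^2=(k-2)X+(k-1){\bf 1}$ for $a=T_k$ and $b=U_tT_kU_t^{\star}$ together with $\tau(a)=\tau(b)=0$ and traciality, and identifying the constant term with $k^{2n-1}m_n(\infty)$ by evaluating at $t=\infty$ where freeness kills all alternating centered words. All of that is correct and matches the paper. The problem is that the entire content of the theorem sits in the step you defer: proving that the total weight attached to $\tau[(ab)^j]$ is $(k-1)^{n-j}\binom{2n}{n-j}$. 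You say you would "extract a recursion and verify it matches the Catalan triangle," but you do not produce the recursion, and this is where the real work lies.

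Concretely, the paper needs three ingredients that your outline does not supply. First, a structural uniqueness lemma (Lemma \ref{Lem1}): the reduced expansion of $[({\bf 1}+a)({\bf 1}+b)]^n$ involves five families of words, $\mathbf 1$, $(ab)^j$, $(ba)^j$, $(ab)^ja$, $(ba)^jb$, and one must prove the coefficient identities $m_n=e_{n,0}=f_{n,0}$, $c_{n,j}=e_{n,j-1}=f_{n,j-1}$, $d_{n,j}=c_{n,j+1}$ (obtained by left-multiplying by $a$ and right-multiplying by $b$ and using $x(x+{\bf 1})=(k-1)(x+{\bf 1})$) before the five families can be collapsed, after taking traces, into a single coefficient $K_{n,j}$ multiplying $\tau[(ab)^j]$. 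Second, the three-term recursion $K_{n+1,j}=(k-1)^2K_{n,j+1}+2(k-1)K_{n,j}+K_{n,j-1}$. Third, and most importantly, the boundary value at $j=0$: the recursion for $K_{n+1,1}$ involves a quantity $K_{n,0}$ which is \emph{not} the constant term but the specific combination \eqref{KN} of the $c_{n,l}$, and proving $K_{n,0}=(k-1)^n\binom{2n}{n}$ (Lemma \ref{Lem4}) requires feeding back the Catalan-number identity $k^2m_n-m_{n+1}=(k-1)^nC_n$ coming from the stationary moments. In other words, the stationary distribution does not merely label the constant term a posteriori, as in your last step; it is an active input to the induction that pins down the binomial coefficients. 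Until you exhibit the recursion, its boundary data, and this link to $m_n(\infty)$, the proposal is a correct reduction of the theorem to an unproven combinatorial identity rather than a proof.
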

The proof of this Theorem relies on the following four key lemmas. 
\begin{lem}\label{Lem1}
Let $a, b \in \mathscr{A}$ be two operators satisfying $a^2=(k-2)a + (k-1){\bf 1}$, $b^2=(k-2) b+ (k-1){\bf 1}$. Then, the expansion of $[(a+1)(b+1)]^n$ is uniquely written as:
\begin{equation}\label{Expansion1}
[({\bf 1}+a)({\bf 1}+b)]^n=m_n {\bf 1}+ \sum_{j=1}^n c_{n,j}(ab)^j+ \sum_{j=1}^{n-1} d_{n,j}(ba)^j+ \sum_{j=0}^{n-1} e_{n,j}(ab)^ja+ \sum_{j=0}^{n-1} f_{n,j}(ba)^jb
\end{equation}
for some integer sequences $m_n, (c_{n,j}), (d_{n,j}), (e_{n,j}), (f_{n,j})$ satisfying 
\begin{eqnarray*}
m_n &=&  f_{n,0} = e_{n,0}, \\ 
c_{n,j} & = & f_{n,j-1} = e_{n,j-1}, \quad 1 \leq j \leq n, \\  
d_{n,j}  & = & c_{n,j+1}, \quad 1 \leq j \leq n-1. 
\end{eqnarray*}
\end{lem}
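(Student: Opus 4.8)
The plan is to prove the identity in the universal unital algebra $\mathcal{U}:=\mathbb{C}\langle a,b\rangle/(a^{2}-(k-2)a-(k-1),\ b^{2}-(k-2)b-(k-1))$, after which it holds in any $\mathscr{A}$ by the universal property. The two defining relations furnish rewriting rules $a^{2}\mapsto(k-2)a+(k-1)$ and $b^{2}\mapsto(k-2)b+(k-1)$ with integer coefficients that strictly lower the length of a monomial each time an adjacent repeated letter is removed. Applying them to the expanded product $X^{n}:=[({\bf 1}+a)({\bf 1}+b)]^{n}$ therefore terminates and rewrites it as a $\mathbb{Z}$-linear combination of the \emph{alternating words} ${\bf 1},(ab)^{j},(ba)^{j},(ab)^{j}a,(ba)^{j}b$; this gives existence of \eqref{Expansion1} with integer coefficients. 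The index ranges are forced by the fact that reductions never increase length: every monomial of $X^{n}$ has length at most $2n$, and the only one of maximal length $2n$ is $(ab)^{n}$ (select the term $ab$ from each of the $n$ factors), so $(ba)^{n}$ never appears and the remaining words are confined to the declared ranges.

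For uniqueness I would identify $\mathcal{U}$ with the algebra free product $A\ast_{\mathbb{C}}B$ of $A=\mathbb{C}[a]/(a^{2}-(k-2)a-(k-1))$ and $B=\mathbb{C}[b]/(b^{2}-(k-2)b-(k-1))$, each two-dimensional since the quadratic factors as $(x-(k-1))(x+1)$. A standard basis of such a free product consists of ${\bf 1}$ together with the alternating products of elements of linear complements of $\mathbb{C}{\bf 1}$ in $A$ and $B$; as $\{{\bf 1},a\}$ and $\{{\bf 1},b\}$ are bases of $A$ and $B$, the alternating monomials in $a,b$ form a basis of $\mathcal{U}$. (Equivalently, Bergman's diamond lemma applies: the two rules have no overlapping ambiguities, being supported on $aa$ and $bb$, so the system is confluent and its irreducible words are a basis.) This makes the five coefficient families in \eqref{Expansion1} well defined, and the remaining task is to read off the three relations without computing them.

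The first relation comes from symmetry. Let $\psi$ be the anti-automorphism of $\mathcal{U}$ with $\psi(a)=b,\ \psi(b)=a$, which is well defined because it permutes the two defining relations. Since $\psi(({\bf 1}+a)({\bf 1}+b))=({\bf 1}+a)({\bf 1}+b)$, we get $\psi(X^{n})=\psi(X)^{n}=X^{n}$. On alternating words $\psi$ fixes ${\bf 1},(ab)^{j},(ba)^{j}$ and interchanges the palindromes $(ab)^{j}a\leftrightarrow(ba)^{j}b$; comparing $\psi$ applied to \eqref{Expansion1} with \eqref{Expansion1} and invoking uniqueness yields $e_{n,j}=f_{n,j}$ for every $j$.

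The remaining relations come from the factorisation $a({\bf 1}+a)=a+a^{2}=(k-1)({\bf 1}+a)$, which gives the eigen-relation $aX^{n}=(k-1)X^{n}$. I would left-multiply \eqref{Expansion1} by $a$, reduce each product to alternating words through
\begin{align*}
a(ab)^{j}&=(k-2)(ab)^{j}+(k-1)(ba)^{j-1}b, & a(ba)^{j}&=(ab)^{j}a,\\
a(ab)^{j}a&=(k-2)(ab)^{j}a+(k-1)(ba)^{j}, & a(ba)^{j}b&=(ab)^{j+1},
\end{align*}
and match coefficients against $(k-1)X^{n}$ by uniqueness: the constant term gives $m_{n}=e_{n,0}$, the coefficient of $(ab)^{j}$ gives $c_{n,j}=f_{n,j-1}$ for $1\le j\le n$, and those of $(ba)^{j}$ and $(ab)^{j}a$ give $d_{n,j}=e_{n,j}$ for $1\le j\le n-1$. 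Feeding in $e_{n,j}=f_{n,j}$ from the previous step collapses these into $m_{n}=e_{n,0}=f_{n,0}$, $c_{n,j}=e_{n,j-1}=f_{n,j-1}$ and $d_{n,j}=c_{n,j+1}$, as claimed. I expect the main obstacle to be purely bookkeeping: tracking the index shifts $j\mapsto j\pm1$ in the matching step so that every term stays in its declared range, and, more foundationally, nailing down the linear independence of the alternating words on which the entire coefficient comparison rests.
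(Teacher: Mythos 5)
Your proposal is correct, and it reaches the three coefficient relations by a route that genuinely differs from the paper's in two respects. For the relations themselves, the paper exploits the eigen-relation on \emph{both} sides: it left-multiplies \eqref{Expansion1} by $a$ (using $a({\bf 1}+a)=(k-1)({\bf 1}+a)$) to get $m_n=e_{n,0}$, $c_{n,j}=f_{n,j-1}$, $e_{n,j}=d_{n,j}$, and then right-multiplies by $b$ (using $({\bf 1}+b)b=(k-1)({\bf 1}+b)$) to get the mirror set $m_n=f_{n,0}$, $c_{n,j}=e_{n,j-1}$, $f_{n,j}=d_{n,j}$, combining the two. You perform only the first of these computations and replace the second by the observation that the flip anti-automorphism $\psi$ fixes $({\bf 1}+a)({\bf 1}+b)$ and exchanges $(ab)^ja\leftrightarrow(ba)^jb$, giving $e_{n,j}=f_{n,j}$ for free; your reduction identities and the resulting coefficient matching check out, and together the two steps recover exactly the paper's conclusions. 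This halves the bookkeeping at the cost of one symmetry observation. The second, more substantive difference is foundational: the paper disposes of uniqueness in one sentence (``the expansion is a reduced expression''), which is delicate since for particular operators $a,b$ in a particular $\mathscr{A}$ the alternating words need not be linearly independent. Your move to the universal quotient $\mathbb{C}\langle a,b\rangle/(a^{2}-(k-2)a-(k-1),\,b^{2}-(k-2)b-(k-1))\cong A\ast_{\mathbb{C}}B$, where the alternating words do form a basis (free-product basis or diamond lemma), is the correct way to make the coefficient comparison legitimate, after which the identity with those integer coefficients specializes to any $\mathscr{A}$. Your length argument pinning down the index ranges (only $(ab)^n$ survives at length $2n$, reductions never increase length) is also sound. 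In short: same engine as the paper for the recursions, a symmetry shortcut for half of them, and a more rigorous treatment of the uniqueness on which the whole comparison rests.
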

\begin{proof}
For sake of clarity, we shall omit the notation ${\bf 1}$ in front of the constant terms. Firstly, the uniqueness follows from the fact that the expansion is a reduced expression.  
Now, since $a(a+{\bf 1}) = (k-1)(a+{\bf 1})$ then 
\begin{align*}
(k-1)[(a+{\bf 1})(b+{\bf 1})]^n  & = a[({\bf 1}+a)({\bf 1}+b)]^n 
\\& = m_n a + \sum_{j=1}^n c_{n,j}a(ab)^j+ \sum_{j=1}^{n-1} d_{n,j}a(ba)^j+ \sum_{j=0}^{n-1} e_{n,j}a(ab)^ja+ \sum_{j=0}^{n-1} f_{n,j}a(ba)^jb
\\& = m_na + (k-2)\sum_{j=1}^nc_{n,j}(ab)^j + (k-1) \sum_{j=0}^{n-1}c_{n,j+1}(ba)^jb + \sum_{j=1}^{n-1}d_{n,j}(ab)^ja 
\\& + \sum_{j=0}^{n-1}e_{n,j}[(k-2)(ab)^ja+(k-1)(ba)^j]  + \sum_{j=1}^{n}f_{n,j-1}(ab)^j. 
\\& =  (k-1)e_{n,0}+ \sum_{j=1}^n[(k-2)c_{n,j}+f_{n,j-1}](ab)^j + (k-1) \sum_{j=1}^{n-1}e_{n,j}(ba)^j + \\&
 + (m_n+(k-2)e_{n,0})a+  \sum_{j=1}^{n-1}[(k-2)e_{n,j}+d_{n,j}](ab)^ja + (k-1) \sum_{j=0}^{n-1}c_{n,j+1}(ba)^jb. 
\end{align*}
Multiplying \eqref{Expansion1} by $(k-1)$ and using the uniqueness of the coefficients, we readily get: 
\begin{equation}\label{Rec01}
m_n = e_{n,0}, \quad c_{n,j} = f_{n,j-1}, 1 \leq j \leq n, \quad e_{n,j} = d_{n,j}, 1 \leq j \leq n-1.
\end{equation}
Similarly, $b(b+{\bf 1}) = (k-1)(b + {\bf 1})$ so that 
\begin{align*}
(k-1)[(a+{\bf 1})(b+{\bf 1})]^n  & = [({\bf 1}+a)({\bf 1}+b)]^n b
\\& = m_n b + \sum_{j=1}^n c_{n,j}(ab)^jb+ \sum_{j=1}^{n-1} d_{n,j}(ba)^jb+ \sum_{j=0}^{n-1} e_{n,j}(ab)^jab+ \sum_{j=0}^{n-1} f_{n,j}(ba)^jb^2
\\& = m_nb + (k-2)\sum_{j=1}^nc_{n,j}(ab)^j + (k-1) \sum_{j=0}^{n-1}c_{n,j+1}(ab)^ja + \sum_{j=1}^{n-1}d_{n,j}(ba)^jb 
\\& + \sum_{j=1}^{n}e_{n,j-1}(ab)^{j}  + (k-2) \sum_{j=0}^{n-1}f_{n,j}(ba)^jb + (k-1)\sum_{j=0}^{n-1}f_{n,j}(ba)^j. 
\\& =  (k-1)f_{n,0}+ \sum_{j=1}^n[(k-2)c_{n,j}+e_{n,j-1}](ab)^j + (k-1) \sum_{j=1}^{n-1}f_{n,j}(ba)^j  \\&
 + (m_n+(k-2)f_{n,0})b+ (k-1) \sum_{j=0}^{n-1}c_{n,j+1}(ab)^ja + \sum_{j=1}^{n-1}[(k-2)f_{n,j}+d_{n,j}](ba)^jb . 
\end{align*}
The uniqueness property again yields: 
\begin{equation}\label{Rec02}
m_n = f_{n,0}, \quad c_{n,j} = e_{n,j-1}, 1 \leq j \leq n, f_{n,j} = d_{n,j}, 1 \leq j \leq n-1.
\end{equation}
Combining \eqref{Rec01} and \eqref{Rec02}, the lemma is proved. 
\end{proof} 
According to Lemma \ref{Lem1}, we only need to focus on the sequences $(m_n)_n, (c_{n,j})_{1 \leq j \leq n}$. The former is closely related to the moment sequence $m_n(\infty)$ of $\tilde{\mu}_{\infty}$.
As to the latter, it satisfies the following relations: 
\begin{lem}\label{Lem2}
For any $2 \leq j \leq n-1$, 
\begin{equation}\label{R01}
c_{n+1,j}= (k-1)c_{n,j}+c_{n,j-1} + (k-1)^2e_{n,j} + (k-1)e_{n,j-1},  
\end{equation}
while 
\begin{equation}\label{Sys01}
\begin{cases}
	c_{n+1,n+1}=c_{n,n}=1 \\
	c_{n+1,n}=c_{n,n-1}+ (k-1) + (k-1)e_{n,n-1} \\ 
	c_{n+1,1}=(k-1)c_{n,1}+(k-1)^2e_{n,1}+(k-1) e_{n,0}+m_n
	\end{cases}.
\end{equation}
\end{lem}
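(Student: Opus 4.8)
The plan is to induct on $n$ via the recursion $X^{n+1}=X\cdot X^n$ with $X:=({\bf 1}+a)({\bf 1}+b)={\bf 1}+a+b+ab$: I substitute the reduced expansion \eqref{Expansion1} for $X^n$, re-reduce the product into the canonical form of Lemma \ref{Lem1}, and read off the coefficient of $(ab)^j$ using the uniqueness asserted there. First I would record the braiding identities that drive the re-reduction; besides the defining relations $a^2=(k-2)a+(k-1){\bf 1}$ and $b^2=(k-2)b+(k-1){\bf 1}$, the ones needed are
\begin{gather*}
a(ab)^i=(k-2)(ab)^i+(k-1)(ba)^{i-1}b, \qquad b(ba)^i b=(k-2)(ba)^i b+(k-1)(ab)^i, \\
a(ba)^{i-1}b=(ab)^i, \qquad a(ba)^i=(ab)^i a, \qquad b(ab)^i=(ba)^i b,
\end{gather*}
each an immediate consequence of the quadratic relations together with $(ab)^i=ab(ab)^{i-1}$ and $(ba)^i=ba(ba)^{i-1}$.

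Next I would multiply term by term, the organizing observation being that, after reduction, a monomial of $X^n$ feeds into $(ab)^j$ only through a short list of channels. The identity ${\bf 1}$ carries $c_{n,j}(ab)^j$ across; left multiplication by $a$ produces $(ab)^j$ from $c_{n,j}(ab)^j$ (via the first braiding identity, with coefficient $(k-2)c_{n,j}$) and from $f_{n,j-1}(ba)^{j-1}b$ (exactly); left multiplication by $b$ produces $(ab)^j$ solely from $f_{n,j}(ba)^j b$ via the second braiding identity, with coefficient $(k-1)f_{n,j}$; and left multiplication by $ab$ produces $(ab)^j$ from $c_{n,j-1}(ab)^{j-1}$ and, after re-reduction, from $f_{n,j-1}(ba)^{j-1}b$ and $f_{n,j}(ba)^j b$ with coefficients $(k-2)f_{n,j-1}$ and $(k-1)(k-2)f_{n,j}$. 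Collecting all contributions gives $(k-1)c_{n,j}+c_{n,j-1}+(k-1)f_{n,j-1}+(k-1)^2 f_{n,j}$, and substituting the Lemma \ref{Lem1} identity $f_{n,i}=e_{n,i}$ yields exactly \eqref{R01}.

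The boundary indices in \eqref{Sys01} then fall out as degenerations of this bookkeeping. For $j=n+1$ the only surviving channel is $ab\cdot c_{n,n}(ab)^n$, and an easy induction gives $c_{n,n}=1$ (the top term of $X^n$ is $(ab)^n$ with unit coefficient), whence $c_{n+1,n+1}=c_{n,n}=1$. For $j=n$ the generic formula applies verbatim once one notes $c_{n,n}=1$ and $f_{n,n}=0$, the index $n$ lying outside the range of the $f$-sum: this turns $(k-1)c_{n,n}$ into the explicit constant $(k-1)$ and kills the $(k-1)^2e_{n,n}$ term, giving $c_{n+1,n}=c_{n,n-1}+(k-1)+(k-1)e_{n,n-1}$. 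For $j=1$ there is no coefficient $c_{n,0}$; its role is played by the constant term of $X^n$, since $ab\cdot m_n{\bf 1}=m_n(ab)$ contributes $m_n$ to the coefficient of $ab$, giving $c_{n+1,1}=(k-1)c_{n,1}+(k-1)^2e_{n,1}+(k-1)e_{n,0}+m_n$.

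I expect the main obstacle to be the honest bookkeeping in the generic step: one must be certain to have enumerated every channel through which the five monomial families $(ab)^i$, $(ba)^i$, $(ab)^i a$, $(ba)^i b$, ${\bf 1}$ can reduce to $(ab)^j$ after left multiplication by $a$, by $b$, and especially by $ab$ (where a single monomial splits into several reduced terms), and to track the accompanying powers of $(k-1)$ and $(k-2)$ without index or sign slips. To halve the casework I would, if needed, perform the multiplication in two stages, first reducing $({\bf 1}+b)X^n$ to canonical form and then left multiplying by $({\bf 1}+a)$ and reducing again, so that only one of the two quadratic relations is in play at each stage.
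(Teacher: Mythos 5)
Your argument is correct and is essentially the paper's own proof in mirror image: the paper expands $[({\bf 1}+a)({\bf 1}+b)]^{n+1}=[({\bf 1}+a)({\bf 1}+b)]^{n}\cdot({\bf 1}+a+b+ab)$ and reduces by right multiplication using the analogous word identities, whereas you multiply by $X$ on the left; the channel-by-channel bookkeeping, the use of the uniqueness from Lemma \ref{Lem1} to read off coefficients, and the boundary degenerations ($f_{n,n}=0$, $c_{n,n}=1$, and $m_n$ playing the role of $c_{n,0}$) are the same. Your collected coefficient $(k-1)c_{n,j}+c_{n,j-1}+(k-1)f_{n,j-1}+(k-1)^2f_{n,j}$ together with $f_{n,i}=e_{n,i}$ reproduces \eqref{R01} and \eqref{Sys01} exactly.
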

\begin{proof}
Follows readily from 
\begin{equation*}
[({\bf 1}+a)({\bf 1}+b)]^{n+1} = [({\bf 1}+a)({\bf 1}+b)]^n({\bf 1}+a + ab),
\end{equation*}
together with the identities: 
\begin{eqnarray*}
(ab)^j &=& (ab)^{j-1}(ab), \\ 
(ab)^jb & = & (k-2)(ab)^j + (k-1)(ab)^{j-1}a, \\ 
((ab)^{j-1}a)b & = &  (ab)^j, \\ 
((ab)^j a)a & = & (k-2)(ab)^j a + (k-1)(ab)^j, \\ 
((ab)^ja)ab & = & (k-2)(ab)^{j+1}+ (k-1) (ab)^j b \\ 
		 & = & (k-2)(ab)^{j+1}+ (k-1)(k-2) (ab)^j + (k-1)^2(ab)^{j-1}a.
\end{eqnarray*}
\end{proof}
Note that Lemma \eqref{Lem1} allows to rewrite \eqref{R01} and \eqref{Sys01} as 
\begin{equation}\label{Sys02}
\begin{cases}
	c_{n+1,j}= 2(k-1)c_{n,j}+c_{n,j-1} + (k-1)^2c_{n,j+1}, \quad 2 \leq j \leq n+1,\\
	c_{n+1,1}= (2k-1)c_{n,1}+(k-1)^2c_{n,2},
	\end{cases}
\end{equation}
where we set $c_{n,j} = 0, j > n$. Next, we need the following routine computations to prove Lemma \ref{Lem4} below and which give our first formula for $m_n(\infty)$: 

\begin{lem}
	 	For any $n\ge 1$, we have
	 \begin{equation}
	 	m_n(\infty)-m_{n+1}(\infty)=  \frac{(k-1)^{n+1}}{k^{2n+1}} C_n,
	 \end{equation}
 where $C_n$ is the $n$-th Catalan number. In particular, 
 \begin{equation}\label{MomStat1}
m_n(\infty) = 1-\sum_{j=0}^{n-1}\frac{(k-1)^{j+1}}{k^{2j+1}} C_j. 
 \end{equation}
\end{lem}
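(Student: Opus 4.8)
The plan is to evaluate the moments directly against the explicit stationary density $\tilde{\mu}_{\infty}^k$ displayed above, exploiting a fortuitous cancellation. The crucial observation is that forming the difference $m_n(\infty)-m_{n+1}(\infty)$ amounts to integrating $x^n(1-x)$ against $\tilde{\mu}_{\infty}^k$, and the factor $(1-x)$ cancels precisely the same factor appearing in the denominator of the density. Writing $b := 4(k-1)/k^2$ for the right endpoint of the support and factoring the radicand as $4(k-1)x - k^2x^2 = k^2 x(b-x)$, I would arrive at
\begin{equation*}
m_n(\infty)-m_{n+1}(\infty) = \frac{k}{2\pi}\int_0^b x^{n-1/2}(b-x)^{1/2}\,dx.
\end{equation*}

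The second step is to recognize the right-hand side as a Beta integral. Substituting $x = bu$ (or invoking the standard formula $\int_0^b x^{\alpha-1}(b-x)^{\beta-1}\,dx = b^{\alpha+\beta-1}B(\alpha,\beta)$ with $\alpha = n+\tfrac12$ and $\beta = \tfrac32$) yields
\begin{equation*}
m_n(\infty)-m_{n+1}(\infty) = \frac{k}{2\pi}\,b^{n+1}\,\frac{\Gamma(n+\tfrac12)\,\Gamma(\tfrac32)}{\Gamma(n+2)}.
\end{equation*}
It then remains to simplify the ratio of Gamma values and to substitute $b^{n+1} = 4^{n+1}(k-1)^{n+1}/k^{2n+2}$. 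Using $\Gamma(\tfrac32) = \sqrt{\pi}/2$, $\Gamma(n+2) = (n+1)!$ and the half-integer value $\Gamma(n+\tfrac12) = \sqrt{\pi}\,(2n)!/(4^n n!)$, the factors of $\pi$ and of $4$ collapse and the Catalan number $C_n = (2n)!/(n!(n+1)!)$ emerges, giving exactly $(k-1)^{n+1}C_n/k^{2n+1}$. This Gamma-function bookkeeping is the only place where care is needed, but it is entirely mechanical.

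Finally, for the closed form I would telescope: summing the identity $m_j(\infty)-m_{j+1}(\infty)$ over $0 \le j \le n-1$ collapses to $m_0(\infty)-m_n(\infty)$, and since $J_{\infty}^0 = P$ is the unit of the compressed algebra one has $m_0(\infty) = k\tau(P) = 1$. Rearranging produces \eqref{MomStat1}. I do not expect any genuine obstacle here: the argument is a short direct computation, and the only subtlety is spotting the $(1-x)$ cancellation that turns an awkward density into a clean Beta integral.
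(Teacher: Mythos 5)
Your proposal is correct and follows essentially the same route as the paper: both exploit the cancellation of the factor $(1-x)$ against the denominator of the stationary density, reduce the difference $m_n(\infty)-m_{n+1}(\infty)$ to a Beta integral via the substitution $x=bu$ with $b=4(k-1)/k^2$, and finish with the half-integer Gamma evaluation producing the Catalan number. The telescoping step with $m_0(\infty)=1$, which the paper leaves implicit, is handled correctly as well.
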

\begin{proof}
\begin{align*}
	m_n(\infty) - m_{n+1}(\infty) &= \frac{1}{2\pi}  \int x^{n-1/2}\sqrt{4(k-1) - k^2x}{\bf 1}_{[0,4(k-1)/k^2]}(x) dx
	\\& = \frac{2^{2n+2}(k-1)^{n+1}}{2\pi k^{2n+1}} \int x^{n-1/2}\sqrt{1-x}{\bf 1}_{[0,1]}(x) dx
	\\& =  \frac{2^{2n}(k-1)^{n+1}}{\sqrt{\pi} k^{2n+1}} \frac{\Gamma(n+1/2)}{(n+1)!}
	\\& =  \frac{(k-1)^{n+1}}{k^{2n+1}} \frac{(2n)!}{(n+1)!n!} = \frac{(k-1)^{n+1}}{k^{2n+1}} C_n.
\end{align*}
The expression of $m_n(\infty)$ follows. 
\end{proof}

\begin{rem}
	Taking the expectation in \eqref{Expansion1}, we infer that  $m_n(\infty) = m_n/k^{2n-1}$. Consequently, the last relation may be written as 
	\begin{equation*}
		k^2m_n - m_{n+1} = (k-1)^n C_n,
	\end{equation*}
or equivalently,
\begin{equation}\label{difference}
	c_{n,1}-c_{n,2}=\frac{(k-1)^{n-1}}{n+1}\binom{2n}{n}.
\end{equation}
This elementary identity will be used in the proof of Lemma \ref{Lem4} below. 
\end{rem}

Now, set
\begin{equation}\label{KN}
	K_{n,0} :=2(k-1)\left(c_{n,1}+(k-2)\sum_{l=2}^{n}(k-1)^{l-2}c_{n,l}\right), \quad n \geq 1,
\end{equation}
where an empty sum is zero. Then
\begin{lem}\label{Lem4}
For any $n\ge 1$, we have
\begin{equation}
	K_{n,0}=(k-1)^{n}\binom{2n}{n}.
\end{equation}
\end{lem}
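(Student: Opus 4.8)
The plan is to prove the identity by induction on $n$, first distilling the recurrences \eqref{Sys02} into a single first-order recurrence for $K_{n,0}$. It is convenient to set $A_n:=c_{n,1}+(k-2)\sum_{l=2}^n(k-1)^{l-2}c_{n,l}$, so that $K_{n,0}=2(k-1)A_n$ and the claim is equivalent to $A_n=\tfrac12(k-1)^{n-1}\binom{2n}{n}$.

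First I would substitute the recurrences \eqref{Sys02} into $A_{n+1}$, handling the boundary term $c_{n+1,1}=(2k-1)c_{n,1}+(k-1)^2c_{n,2}$ separately from the interior terms $c_{n+1,l}=2(k-1)c_{n,l}+c_{n,l-1}+(k-1)^2c_{n,l+1}$ for $2\le l\le n+1$. The weighted interior sum then splits into three pieces; after the reindexings $l\mapsto l-1$ and $l\mapsto l+1$ together with the conventions $c_{n,n+1}=0$ and $c_{n,n}=1$, each piece reduces, through the relation $(k-2)\sigma_n=(k-1)(A_n-c_{n,1})$ for $\sigma_n:=\sum_{l=2}^n(k-1)^{l-1}c_{n,l}$, to a multiple of $A_n$ plus explicit boundary contributions at $l=1,2$. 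Collecting everything, the interior contributions assemble into $4(k-1)A_n$ while the surviving boundary terms collapse to exactly $-(k-1)(c_{n,1}-c_{n,2})$, giving $A_{n+1}=4(k-1)A_n-(k-1)(c_{n,1}-c_{n,2})$.

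At this point the difference identity \eqref{difference}, namely $c_{n,1}-c_{n,2}=\tfrac{(k-1)^{n-1}}{n+1}\binom{2n}{n}$, turns the boundary term into closed form and yields the clean recurrence $K_{n+1,0}=4(k-1)K_{n,0}-\tfrac{2(k-1)^{n+1}}{n+1}\binom{2n}{n}$. Feeding in the conjectured value $K_{n,0}=(k-1)^n\binom{2n}{n}$ reduces the inductive step to the elementary identity $\binom{2n+2}{n+1}=\tfrac{2(2n+1)}{n+1}\binom{2n}{n}$, which is immediate from $(2n+2)(2n+1)=2(n+1)(2n+1)$. The base case $n=1$ is direct: the empty sum vanishes and $c_{1,1}=1$, so $A_1=1$ and $K_{1,0}=2(k-1)=(k-1)\binom{2}{1}$.

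The only delicate step is the bookkeeping in the second paragraph. I expect the main obstacle to be tracking the index shifts across the three weighted sums and checking that the geometric weight $(k-1)^{l-2}$ is precisely the one for which the interior terms telescope into $4(k-1)A_n$, leaving as residue the single combination $c_{n,1}-c_{n,2}$ for which \eqref{difference} is available. Once that recurrence is in hand, the rest of the argument is purely formal.
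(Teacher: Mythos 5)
Your proposal is correct and follows essentially the same route as the paper: induction on $n$, substitution of the recurrences \eqref{Sys02} with the conventions $c_{n,j}=0$ for $j>n$, reindexing the three weighted sums to reach the recurrence $K_{n+1,0}=4(k-1)K_{n,0}-2(k-1)^2(c_{n,1}-c_{n,2})$, and then closing the induction via \eqref{difference} and the identity $\binom{2n+2}{n+1}=\tfrac{2(2n+1)}{n+1}\binom{2n}{n}$. The only cosmetic difference is your normalization $A_n=K_{n,0}/(2(k-1))$, which does not change the argument.
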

\begin{proof}
We proceed by induction: $K_{1,0} = 2(k-1)c_{1,1} = 2(k-1)$. Next, assume the result is valid up to order $n$ and write (we recall that $c_{n,j} =0, j > n$):
\begin{align*}
	K_{n+1,0}=&2(k-1)\left(c_{n+1,1}+(k-2)\sum_{l=2}^{n+1}(k-1)^{l-2}c_{n+1,l}\right)
	\\=&2(k-1)\left((2k-1)c_{n,1}+(k-1)^2c_{n,2}+ (k-2)\sum_{l=2}^{n+1}(k-1)^{l-2}\left[2(k-1)c_{n,l}\right.\right.
	\\&\left.\left.+c_{n,l-1}+(k-1)^2c_{n,l+1}\right]\right)
	\\=&2(k-1)\left((2k-1)c_{n,1}+(k-1)^2c_{n,2}+2(k-2)\sum_{l=2}^{n}(k-1)^{l-1}c_{n,l}\right.
	\\&+\left. (k-2) \sum_{l=1}^{n} (k-1)^{l-1} c_{n,l}+ (k-2)\sum_{l=3}^{n}(k-1)^lc_{n,l+1}\right)
	\\=&2(k-1)\left(3(k-1)c_{n,1}+(k-1)c_{n,2}+4(k-2)\sum_{l=2}^{n}(k-1)^{l-1}c_{n,l}\right)
	\\=&4(k-1)K_{n,0}-2(k-1)^2(	c_{n,1}-c_{n,2}).
\end{align*}
Appealing to the induction hypothesis and to the identity \eqref{difference}, we end up with
\begin{align*}
	K_{n+1,0}=&4(k-1)(k-1)^{n}\binom{2n}{n}-2(k-1)^2 \frac{(k-1)^{n-1}}{n+1}\binom{2n}{n}
	\\=& (k-1)^{n+1}\binom{2n+2}{n+1},
\end{align*}
as desired. 
\end{proof}

We are now ready to prove Theorem \ref{Th2}. 
\begin{proof}[Proof of Theorem \ref{Th2}]
We apply Lemma \ref{Lem1} to $a = T_k$ and $b= U_tT_kU_t^{\star}$ and take the expectation with respect to $\tau$. By the trace property and the fact that $\tau(T_k) = \tau(U_tT_kU_t^{\star}) = 0$, we have 
\begin{equation*}
\tau((ab)^j) = \tau((ba)^j), \quad \tau((ab)^ja)= (k-2)\sum_{l=1}^{j}(k-1)^{j-l}\tau((ab)^l) = \tau((ba)^jb),
\end{equation*}
whence 
\begin{align}\label{Expansion2}
	\tau\big([({\bf 1} +T_k)({\bf 1} + U_tT_kU_t^{\star})]^n\big)=& m_n+\sum_{j=1}^n K_{n,j}\tau((T_kU_tT_kU_t^{\star})^j)
\end{align}
where
\begin{equation}
	K_{n,j}=\begin{cases}
		c_{n,j}+d_{n,j}+(k-2)\sum_{l=j}^{n-1}(k-1)^{l-j}(e_{n,l}+f_{n,l}),& 1\le j\le n-1\\
		1,& j=n
	\end{cases}.
\end{equation}
Equivalently,  Lemma \ref{Lem1} again entails: 
\begin{align*}
	K_{n,j} &= c_{n,j}+c_{n,j+1}+2(k-2)\sum_{l=j}^{n-1}(k-1)^{l-j}c_{n,l+1} \\  
	& = c_{n,j}+c_{n,j+1}+2(k-2)\sum_{l=j+1}^{n}(k-1)^{l-(j+1)}c_{n,l}, \quad 1\le j\le n.
\end{align*}
Appealing further to \eqref{Sys02}, we obtain 
\begin{eqnarray}\label{R02}
K_{n+1,j} & =& (k-1)^2K_{n,j+1} + 2(k-1)K_{n,j} + K_{n,j-1}, \quad 1 \leq j \leq n+1, 
\end{eqnarray}
with the convention $K_{n,j} =0, j > n$ and with $K_{n,0}$ given by \eqref{KN}. Finally, \eqref{R02} is satisfied by the sequence 
\begin{equation*}
(k-1)^{n-j}\binom{2n}{n-j}
\end{equation*}
as readily seen from the identity
\begin{equation*}
\binom{n}{j} + \binom{n}{j+1} = \binom{n+1}{j+1}.
\end{equation*}
Moreover, Lemma \ref{Lem4} and the obvious value $K_{n,n} = 1, n \geq 1,$ show that the boundary conditions coincide, whence we deduce: 
\begin{equation*}
K_{n,j} = (k-1)^{n-j}\binom{2n}{n-j}.
\end{equation*}
Noting that
\begin{equation*}
\tau[(PU_tPU_t^{\star})^n] = \frac{1}{k^{2n}}\tau\big([({\bf 1} +T_k)({\bf 1} + U_tT_kU_t^{\star})]^n\big),
 \end{equation*}
the Theorem is proved. 
\end{proof}

\begin{rem}[Combinatorial approach]
Applying the moment formula with product as entries, it follows that: 
\begin{equation*}
\tau(J_t^n) = \sum_{NC(2n)} \kappa_{\pi}(\underbrace{P, \dots, P}_{2n}) \tau_{K(\pi)} (U_t, U_t^{\star}, \dots, U_t, U_t^{\star})).
\end{equation*}\label{FreeCumPk2}
When $\tau(P) = 1/2$, it is known that the free cumulants of $P$ are given by (\cite{Nic-Spe}, Exercise 11.35): 
\begin{equation}
\tau_{2j+1}(P) = \frac{\delta_{j0}}{2}, \quad \tau_{2j}(P) = \frac{(-1)^{j-1}}{2^{2j}}C_{j-1}.
\end{equation}
It would be interesting to recover \eqref{Expk2} using these formulas together with properties of non crossing partitions. More generally, we can prove (see Appendix B) that if $P$ is a self-adjoint projection with $\tau(P) = \alpha$, then 
\begin{equation}\label{FreeCumP}
\kappa_1(P) = \alpha, \quad \kappa_n(P) = \frac{1}{2(2n-1)} \left[P_{n-2}(1-2\alpha) - P_{n}(1-2\alpha)\right], \quad n \geq 2.
\end{equation}
where $(P_n)_{n \geq 0}$ is the family of Legendre polynomials defined through the Gauss hypergeometric function by: 
\begin{equation*}
P_n(x) = {}_2F_1\left(-n,n+1, 1; \frac{1-x}{2}\right), \quad x \in [-1,1].
\end{equation*} 
\end{rem}

\subsection{PDE for the moment generating function}
Let 
\begin{equation*}
M_{t,k}(z) := k\sum_{n \geq 0} \tau(J_t^n) z^n, \quad \rho_{t,k} :=  \sum_{n \geq 1} \tau[(T_kU_tT_kU_t^{\star})^n]\frac{z^n}{(k-1)^n}, 
\end{equation*}
be the moment generating functions of $J_t$ in the compressed space $(P\mathscr{A}P, k\tau)$ and of $T_kU_tT_kU_t^{\star}$ in $(\mathscr{A}, \tau)$. Both series have positive convergence radii since the corresponding operators are bounded. 
From Theorem \ref{Th2}, we deduce the following relation: 
\begin{cor}
For any $k \geq 2$ and any $t > 0$, 
\begin{equation*}
M_{t,k}(z) = M_{\infty, k}(z) + \frac{k^2}{\sqrt{k^2-4(k-1)z}} \rho_{t,k}\left(\alpha\left[\frac{4(k-1)z}{k^2}\right]\right).
\end{equation*}
where 
\begin{equation}\label{SMGF}
M_{\infty, k}(z) := \sum_{n \geq 0}m_n(\infty) z^n = \frac{2-k + \sqrt{k^2 - 4(k-1)z}}{2(1-z)}, \quad |z| < 1, 
\end{equation}
is the moment generating function of $J_{\infty}$ and 
\begin{equation*}
\alpha(z) = \frac{1-\sqrt{1-z}}{1+\sqrt{1-z}}, \quad z \in \mathbb{C} \setminus [1,\infty[.
\end{equation*}
\end{cor}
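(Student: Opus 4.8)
The plan is to pass from the moment identity of Theorem~\ref{Th2} to the stated generating-function relation by summing against $z^n$ and resumming an explicit binomial series. Write $\mu_j := \tau[(T_kU_tT_kU_t^\star)^j]$ and recall $m_n(t) = k\tau(J_t^n)$, with the convention that the $n=0$ term equals $k\tau(P)=1=m_0(\infty)$; hence the constant terms of $M_{t,k}$ and $M_{\infty,k}$ cancel and it suffices to treat $\sum_{n\ge 1}[m_n(t)-m_n(\infty)]z^n$. Substituting the expansion of Theorem~\ref{Th2} and writing $\mu_j = (k-1)^j\cdot \mu_j/(k-1)^j$ so as to match the coefficients of $\rho_{t,k}$, the factors $(k-1)^{n-j}$ and $(k-1)^j$ combine into a single $(k-1)^n$, so that
\[
m_n(t)-m_n(\infty) = k\left(\frac{(k-1)z}{k^2}\right)^n\sum_{j=1}^n \binom{2n}{n-j}\frac{\mu_j}{(k-1)^j}.
\]
Setting $y := (k-1)z/k^2$ and interchanging the (absolutely convergent, for $z$ near the origin, since the operators are bounded) order of summation in $n$ and $j$ reduces the whole problem to evaluating, for each fixed $j\ge 1$, the single-variable series $\sum_{n\ge j}\binom{2n}{n-j}y^n$.

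The crux is therefore the generating-function identity
\[
\sum_{n\ge j}\binom{2n}{n-j}\,y^n = \frac{\alpha(4y)^{j}}{\sqrt{1-4y}},
\]
which I would establish as follows. Shifting the index by $m=n-j$ turns the left-hand side into $y^j\sum_{m\ge 0}\binom{2m+2j}{m}y^m$, and the classical shifted central-binomial generating function gives $\sum_{m\ge 0}\binom{2m+r}{m}y^m = C(y)^r/\sqrt{1-4y}$ with $r=2j$, where $C(y) = (1-\sqrt{1-4y})/(2y)$ is the Catalan series. It then remains to identify $y\,C(y)^2$ with $\alpha(4y)$: using $C = 1 + yC^2$ and writing $s = \sqrt{1-4y}$, a short computation yields $y\,C(y)^2 = (1-s)/(1+s) = \alpha(4y)$, exactly the argument appearing in the statement. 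This algebraic identification is the step I expect to be the main obstacle, since it is what forces the precise normalisations — the factor $4y$ inside $\alpha$, the dilation $4(k-1)z/k^2$, and the analytic prefactor — to come out consistently.

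With the identity in hand the proof closes quickly. Summing over $j$ gives
\[
\sum_{n\ge 1}[m_n(t)-m_n(\infty)]z^n = \frac{k}{\sqrt{1-4y}}\sum_{j\ge 1}\frac{\mu_j}{(k-1)^j}\,\alpha(4y)^j = \frac{k}{\sqrt{1-4y}}\,\rho_{t,k}\!\big(\alpha(4y)\big),
\]
and substituting $y=(k-1)z/k^2$, so that $4y = 4(k-1)z/k^2$ and $\sqrt{1-4y} = \sqrt{k^2-4(k-1)z}/k$, turns the prefactor $k/\sqrt{1-4y}$ into $k^2/\sqrt{k^2-4(k-1)z}$; adding back the matching constant terms yields the claimed relation, valid in a neighbourhood of $z=0$ and then by analytic continuation. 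Finally, the accompanying closed form \eqref{SMGF} for $M_{\infty,k}$ follows independently from the recurrence $m_n(\infty)-m_{n+1}(\infty) = (k-1)^{n+1}C_n/k^{2n+1}$ proved above: multiplying $M_{\infty,k}$ by $(1-z)$ telescopes the moments into $1 - \frac{(k-1)z}{k}\,C(y)$, and the same evaluation of the Catalan series $C(y)$ collapses this to $[\,2-k+\sqrt{k^2-4(k-1)z}\,]/2$, giving \eqref{SMGF} after dividing by $(1-z)$.
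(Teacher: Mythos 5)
Your argument is correct and follows the same route as the paper: substitute the binomial expansion of Theorem \ref{Th2} into $M_{t,k}$, observe that the constant terms cancel, and resum the resulting double series after the change of variable $y=(k-1)z/k^2$. The only point where you diverge is that the paper simply cites the resummation identity
\begin{equation*}
b_n = \sum_{j=0}^n\binom{2n}{n-j}a_j \ \Longrightarrow\ \sum_{n\geq 0}b_n\frac{w^n}{4^n} = \frac{1+\alpha(w)}{1-\alpha(w)}\sum_{n\geq 0}a_n[\alpha(w)]^n
\end{equation*}
from \cite{Man-Sri} and the closed form \eqref{SMGF} from \cite{Dem}, whereas you prove both from scratch. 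Your identity $\sum_{n\geq j}\binom{2n}{n-j}y^n=\alpha(4y)^j/\sqrt{1-4y}$, obtained from the shifted central-binomial series $\sum_{m\geq 0}\binom{2m+r}{m}y^m=C(y)^r/\sqrt{1-4y}$ together with the identification $yC(y)^2=(1-s)/(1+s)=\alpha(4y)$, $s=\sqrt{1-4y}$, is exactly the coefficient-wise form of the cited identity (using $(1+\alpha(w))/(1-\alpha(w))=1/\sqrt{1-w}$), and your telescoping $(1-z)M_{\infty,k}(z)=1-\frac{(k-1)z}{k}\,C\bigl(\frac{(k-1)z}{k^2}\bigr)$ does recover \eqref{SMGF}. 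All normalisations check out, so the proposal buys self-containedness at the cost of a short extra computation; nothing is missing.
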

\begin{proof}
It is obvious from Theorem \ref{Th2} that 
\begin{equation*}
M_{t,k}(z) = M_{\infty, k}(z) + k \sum_{n \geq 1}\frac{((k-1)z)^n}{k^{2n}} \sum_{j=1}^n\binom{2n}{n-j} \frac{\tau[(T_kU_tT_kU_t^{\star})^j]}{(k-1)^j}. 
\end{equation*}
The expression of $M_{\infty,k}$ is already known (see e.g. section 5 in \cite{Dem}). Now, recall the following result (\cite{Man-Sri}, p.357): if $(a_n)_{n \geq 0}, (b_n)_{n \geq 0}$ are two real sequences satisfying 
\begin{equation*}
b_n = \sum_{j=0}^n\binom{2n}{n-j} a_j,
\end{equation*}
then 
\begin{equation*}
\sum_{n \geq 0}b_n\frac{w^n}{4^n} = \frac{1+\alpha(w)}{1-\alpha(w)} \sum_{n \geq 0} a_n[\alpha(w)]^n
\end{equation*}
whenever both series converge absolutely. Applying this result with 
\begin{equation*}
a_0 = b_0 = 0, \quad a_j = \frac{\tau[(T_kU_tT_kU_t^{\star})^j]}{(k-1)^j},\,  j \geq 1, \quad w = \frac{4(k-1)z}{k^{2}}, 
\end{equation*}
and noting that 
\begin{equation*}
\frac{1+\alpha(w)}{1-\alpha(w)} = \frac{1}{\sqrt{1-w}}
\end{equation*}
conclude the proof. 
\end{proof}

From this corollary, we can derive a pde for $\rho_{t,k}$: 
\begin{pro}
The moment generating function $\rho_{t,k}(z)$ satisfies the pde: 
\begin{equation}\label{pde0}
\partial_t\rho_{t,k}(z) = -z\partial_z\left[\rho_{t,k}(z) + \frac{4(k-1)-k^2\alpha^{-1}(z)}{4(k-1)(1-\alpha^{-1}(z))}\rho_{t,k}^2(z)\right],
\end{equation}
in a neighborhood of the origin with the initial condition :
\begin{equation*}
	\rho_{0,k}(z) = \frac{(k-1)z(1-z)}{(k-1-z)(1+z-kz)}.
\end{equation*}
\end{pro}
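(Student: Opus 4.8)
The plan is to transport the pde for the moment generating function $M_{t,k}$ of the free Jacobi process onto $\rho_{t,k}$ through the substitution supplied by the preceding corollary. Summing the moment recursion \eqref{Moments3} (with $\lambda=1,\theta=1/k$) against $z^n$, or equivalently quoting \cite{Dem}, gives
\[
\partial_t M_{t,k} = -z\partial_z M_{t,k} + \frac{1}{k}z\partial_z\left[(z-1)M_{t,k}^2 + 2M_{t,k} - 1\right].
\]
Since $M_{\infty,k}$ is the stationary moment generating function it solves this equation with vanishing left-hand side. Writing $M_{t,k} = M_{\infty,k} + R$, where $R := \frac{k}{\sqrt{1-w}}\rho_{t,k}(\alpha(w))$ and $w := 4(k-1)z/k^2$, and subtracting the stationary equation, all terms depending on $M_{\infty,k}$ alone cancel. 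What survives is a pde for $R$ whose nonlinearity is exactly $\frac{1}{k}z\partial_z[(z-1)R^2]$ and whose part linear in $R$ is $-z\partial_z R + \frac{2}{k}z\partial_z[\beta R]$ with $\beta := (z-1)M_{\infty,k} + 1$.

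Next I would insert the explicit stationary function \eqref{SMGF}. Setting $s := \sqrt{1-w} = \sqrt{k^2-4(k-1)z}/k$, a short computation yields the clean identity $\beta = \tfrac{k}{2}(1-s)$, whereupon the linear operator acting on $R$ collapses to $-s\,z\partial_z + \frac{w}{2s}$. The change of variables is then governed by two facts: first, $R$ is in truth a function of $u := \alpha(w)$ alone, because the prefactor $k/s$ equals $k(1+u)/(1-u)$; second, and this is the lever that makes everything work, on functions of $u$ one has $z\frac{d}{dz} = s^{-1}u\frac{d}{du}$, which follows from $z u'/u = 1/s$ together with $1-s^2 = w$. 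Applying these turns $-s\,z\partial_z R + \frac{w}{2s}R$ into $-u\partial_u\tilde R + \frac{w}{2s}\tilde R$ with $\tilde R := \frac{k}{s}\rho_{t,k}$, and here the terms proportional to $\rho_{t,k}$ cancel, leaving precisely $-\frac{ku(1+u)}{1-u}\partial_u\rho_{t,k}$; multiplying by the factor $s/k$ coming from $\partial_t R = (k/s)\partial_t\rho_{t,k}$ reduces the linear part to $-u\partial_u\rho_{t,k}$.

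For the nonlinear term I would use that $z - 1 = [k^2 u - (k-1)(1+u)^2]/[(k-1)(1+u)^2]$ is also a function of $u$, so that $\frac{1}{k}z\partial_z[(z-1)R^2] = \frac{1}{ks}u\partial_u[(z-1)\tilde R^2]$. After the factor $s/k$ and the substitution $\tilde R^2 = k^2(1+u)^2(1-u)^{-2}\rho_{t,k}^2$, the coefficient in front of $\rho_{t,k}^2$ becomes $(z-1)(1+u)^2/(1-u)^2$, which I would identify with $-\frac{4(k-1)-k^2\alpha^{-1}(u)}{4(k-1)(1-\alpha^{-1}(u))}$ using $\alpha^{-1}(u) = 4u/(1+u)^2$ and $1-\alpha^{-1}(u) = (1-u)^2/(1+u)^2$. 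Combining the two pieces and renaming $u$ as $z$ gives \eqref{pde0}. The main obstacle is purely algebraic: arranging the substitutions so that the $\rho_{t,k}$-linear terms visibly cancel and the quadratic coefficient matches, and the two identities $\beta = \tfrac{k}{2}(1-s)$ and $z\frac{d}{dz} = s^{-1}u\frac{d}{du}$ are what keep the bookkeeping manageable.

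Finally, for the initial condition I would evaluate $\rho_{0,k}$ directly. Since $U_0 = \mathbf{1}$, the operator $T_kU_0T_kU_0^{\star}$ equals $T_k^2$, and because $T_k = kP - \mathbf{1}$ carries the two eigenvalues $k-1$ (of trace-weight $1/k$) and $-1$ (of weight $(k-1)/k$), the operator $T_k^2$ carries the eigenvalues $(k-1)^2$ and $1$ with the same weights. Hence $\tau[(T_k^2)^n] = [(k-1)^{2n} + (k-1)]/k$, and summing the two resulting geometric series in $\rho_{0,k}(z) = \sum_{n\geq1}\tau[(T_k^2)^n]z^n/(k-1)^n$ produces $\frac{(k-1)z(1-z)}{(k-1-z)(1+z-kz)}$, as claimed.
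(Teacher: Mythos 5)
Your argument is correct and follows essentially the same route as the paper: start from the pde for $M_{t,k}$ (equivalently the moment recursion), subtract the stationary solution $M_{\infty,k}$, use the identity $(z-1)M_{\infty,k}+1=\tfrac{k}{2}(1-\sqrt{1-w})$ to collapse the linear part, and transport everything through the substitution $u=\alpha(w)$ — you simply carry out explicitly the chain-rule computations the paper summarizes as ``few computations.'' The only cosmetic difference is the initial condition, where you read off $\tau[(T_k^2)^n]=[(k-1)^{2n}+(k-1)]/k$ from the spectral decomposition of $T_k=kP-\mathbf{1}$ instead of solving the generalized Fibonacci recurrence via a Binet formula; both give the same geometric series and the same closed form.
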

\begin{proof}
Let 
\begin{equation*}
G_{t,k}(z) := \frac{1}{z}M_{t,k}\left(\frac{1}{z}\right), \quad |z| > 1,
\end{equation*}
be the Cauchy Stieltjes transform of the free Jacobi process $PU_tPU_t^{\star}P$ with $\tau(P) = 1/k$, and recall from \cite{Dem} that it satisfies the pde: 
\begin{equation*}
\partial_tG_{t,k}(z) = \frac{1}{k}\partial_z[(k-2)zG_{t,k}(z) + z(z-1)G_{t,k}^2(z)]. 
\end{equation*}
Then the variable change $z \mapsto 1/z$ shows that 
\begin{equation}\label{pde1}
\partial_tM_{t,k}(z) = -\frac{z}{k}\partial_z[(k-2)M_{t,k}(z) + (1-z)M_{t,k}^2(z)]
\end{equation}
Now set $R_{t,k} := M_{t,k} - M_{\infty,k}$, then we further get:  
\begin{align*}
\partial_tR_{t,k}(z) & = -\frac{z}{k}\partial_z[(k-2)R_{t,k}(z) + 2(1-z)R_{t,k}(z)M_{\infty,k} + (1-z)R_{t,k}^2(z)] 
\\& =  -\frac{z}{k}\partial_z[\sqrt{k^2-4(k-1)z}R_{t,k}(z) + (1-z)R_{t,k}^2(z)],
\end{align*}
where the first equality follows from the fact that $M_{\infty,k}$ is a stationary solution of the pde \eqref{pde1}: 
\begin{equation*}
\partial_z[(k-2)M_{\infty,k}(z) + (1-z)M_{\infty,k}^2(z)] = 0,
\end{equation*}
and the second one follows from \eqref{SMGF}. Noting that 
\begin{equation*}
\sqrt{k^2-4(k-1)z}R_{t,k}(z) =  k^2\rho_{t,k}\left(\alpha\left[\frac{4(k-1)z}{k^2}\right]\right), 
\end{equation*}
it follows that the map $(t,z) \mapsto \rho_{t,k}(\alpha(z))$ satisfies:
\begin{equation*}
\partial_t\rho_{t,k}(\alpha(z)) = -z\sqrt{1-z}\partial_z\left[\rho_{t,k}(\alpha(z)) + \frac{4(k-1)-k^2z}{4(k-1)(1-z)}\rho_{t,k}^2(\alpha(z))\right].
\end{equation*}
Next, $\alpha$ is a one-to-one holomorphic map in $\mathbb{C}\setminus [1,\infty[$ onto the open unit disc with inverse given by: 
\begin{equation*}
\alpha^{-1}(z) = \frac{4z}{(1+z)^2}. 
\end{equation*}
Moreover
\begin{equation*}
\alpha'(z) = \frac{\alpha(z)}{z\sqrt{1-z}}
\end{equation*}
whence 
\begin{equation*}
[\alpha^{-1}]'(z) = \frac{\alpha^{-1}(z)\sqrt{1-\alpha^{-1}}(z)}{z}.
\end{equation*}
The sought pde satisfied by $\rho_{t,k}(z)$ follows after few computations. 
Finally,
\begin{equation*}
	 \rho_{0,k}(z) = \sum_{n \geq 1} \tau[(T_k)^{2n}]\frac{z^n}{(k-1)^n}.
\end{equation*}
Letting $h_n := \tau[(T_k)^n], n \geq 0,$ then we can easily prove using the relation $T_k^2 = (k-2)T_k + (k-1)$ that 
\begin{equation*}
	h_{n+2} = (k-2)h_{n+1}+ (k-1)h_n, \quad h_0 = 1, h_1 = 0.
\end{equation*} 
This is a generalized Fibonacci sequence for which a Binet formula already exists (\cite{Hor-Koc}):
\begin{equation*}
	h_n=\frac{(k-1)^n+(-1)^n(k-1)}{k}, \quad n\ge0.
\end{equation*}
Then,
\begin{equation*}
	\rho_{0,k}(z) = \sum_{n \geq 1} h_{2n}\frac{z^n}{(k-1)^n}=\frac{(k-1)z(1-z)}{(k-1-z)(1+z-kz)}.
\end{equation*}
\end{proof}

Setting $\eta_{t,k}(z) := \rho_{t,k}(e^{t}z)$ then 
\begin{align*}
\partial_{t}\eta_{t,k} &= \partial_t\rho_{t,k}(e^tz) +  e^{t}z\partial_z\rho_{t,k}(e^tz).
\end{align*}
Which yields,
\begin{align}
\label{pde2}
\partial_{t}\eta_{t,k} & =  -z\partial_z\left[\frac{4(k-1)-k^2\alpha^{-1}(e^tz)}{4(k-1)(1-\alpha^{-1}(e^tz))}\eta_{t,k}^2(z)\right].
\end{align}
In particular, if $k=2$ then 
\begin{align*}
	\partial_t\eta_{t,2}(z) =  -z\partial_z\left[\eta_{t,2}^2(z)\right],
\end{align*}
while 
\begin{equation*}
	\eta_{0,2}(z) = \frac{z}{1-z}.
\end{equation*}
In this case, it is known that $\eta_{t,2}$ is the moment generating function of the free unitary Brownian motion $e^tU_{2t}$ (\cite{DHH}): 
\begin{equation*}
\eta_{t,2}(z) = \sum_{n \geq 1}\frac{z^n}{n}L_{n-1}^{(1)}(2nt)
\end{equation*}
where $L_{n-1}^{(1)}$ is the $(n-1)$-th Laguerre polynomial of parameter one. 
However, it turns out that for $k\ge3$ the computations becomes very complicated due to the high non-linearity of the pde \eqref{pde2}, we then postpone its analysis to a future research work. 

\subsection{Characteristic curves of the pde} 
Denote 
\begin{equation*}
\lambda_{k}(z) := \frac{4(k-1)-k^2\alpha^{-1}(z)}{4(1-\alpha^{-1}(z))}, 
\end{equation*}
so that the pde \eqref{pde0} reads: 
\begin{equation*}
\partial_t\rho_{t,k}(z) = -z\partial_z\left[\rho_{t,k}(z) + \frac{\lambda_k(z)}{(k-1)}\rho_{t,k}^2(z)\right]. 
\end{equation*}
Elementary transformations show that $\tilde{\rho}_{t,k}(z) = [\rho_{t,k}(z)]/(k-1)$ satisfies
\begin{equation*}
\partial_t\tilde{\rho}_{t,k}(z) = -z\partial_z\left[\tilde{\rho}_{t,k}(z) + \lambda_k(z)\tilde{\rho}_{t,k}^2(z)\right]. 
\end{equation*}
Let $z$ be fixed in a neighborhood of the origin. Then a characteristic curve starting at $z$ is locally the unique solution of the Cauchy problem: 
\begin{equation*}
z_k'(t) = z(t)[1+2\lambda_k(z_k(t))f_k(t)], \quad z_k(0) = z,
\end{equation*}
where we set $f_k(t) := \tilde{\rho}_{t,k}(z_k(t))$. Along such curve, it holds that: 
\begin{equation*}
(f_k)'(t) = -z(t)(\lambda_k)'(z_k(t))f_k^2(t), \quad f_k(0) =  \tilde{\rho}_{t,k}(z) = \frac{z(1-z)}{(k-1-z)(1+z-kz)}.
\end{equation*}
Now, set
\begin{equation*}
H(u) := \frac{u+1}{u-1} 
\end{equation*}
and note that $H$ is an involution $(H^{-1} = H)$, $H'(u) = -(H(u)-1)^2/2$ and 
\begin{equation*}
\lambda_k(z) = \frac{1}{4}[k^2 - (k-2)^2H^2(z)].  
\end{equation*}
Then the curve defined by $y_k(t) := H(z_k(t))$ solves locally around $-1$ the Cauchy problem:
\begin{equation*}
y_k'(t)  = \frac{1-y_k^2(t)}{2}\left[1+\frac{k^2 - (k-2)^2y_k^2(t)}{2}f_k(t)\right],  \quad  \quad  y_k(0) = \frac{z+1}{z-1} := y.
\end{equation*}
Besides,
\begin{align*}
(f_k)'(t) &= -\frac{(k-2)^2}{4} y_k(t)H(y_k(t)) (y_k(t) -1)^2f_k^2(t) 
\\& = \frac{(k-2)^2(1-y_k^2(t))}{4}y_k(t)f_k^2(t). 
\end{align*}
Consequently, 
\begin{align*}
y_k'(t)y_k(t)\frac{(k-2)^2(1-y_k^2(t))}{4}f_k^2(t) &= \frac{1-y_k^2(t)}{2}\left[1+\frac{k^2 - (k-2)^2y_k^2(t)}{2}f_k(t)\right](f_k)'(t),
\end{align*}
or equivalently 
\begin{align*}
\frac{(k-2)^2}{4}\left[(y_k^2)'(t)f_k^2(t) + y_k^2(t)(f_k^2)'(t)\right] &= \left[1+\frac{k^2}{2}f_k(t)\right](f_k)'(t). 
\end{align*}
This equation is integrable and yields: 
\begin{align*}
\frac{(k-2)^2}{4}\left[(y_k^2)(t)f_k^2(t) - y_k^2(0)(f_k^2)(0)\right] &= \left[\frac{k^2}{4}f_k^2(t) + f_k(t) - \frac{k^2}{4}f_k^2(0) - f_k(0)\right].
\end{align*}
Written differently leads to the functional equation: 
\begin{equation*}
\tilde{\lambda}_k(y(t))f_k^2(t) + f_k(t) - \tilde{\lambda}_k(y)f_k^2(0) - f_k(0) = 0,
\end{equation*}
where we simply wrote
\begin{equation*}
\tilde{\lambda}_k(y) = \frac{1}{4}[k^2 - (k-2)^2y^2] = \lambda_k(z).
\end{equation*}
Setting $g_k(0) := \tilde{\lambda}_k(y)f_k^2(0) + f_k(0)$, then one has locally:
\begin{equation*}
f_k(t) = \frac{-1+\sqrt{1+4g_k(0)\tilde{\lambda}_k(y_k(t))}}{2\tilde{\lambda}_k(y_k(t))},
\end{equation*}
where the principal determination of the square root is considered. It follows that:
\begin{align*}
y_k'(t) &= \sqrt{1+4g_k(0)\tilde{\lambda}_k(y_k(t))}\frac{1-y_k^2(t)}{2} 
\\& = \frac{1-y_k^2(t)}{2}\sqrt{1+k^2g_k(0) - (k-2)^2g_k(0)(y_k^2(t))}.
\end{align*}
Now, consider the indefinite integral 
\begin{equation*}
I_{A,B}(u) = 2\int^u \frac{du}{(1-u^2)\sqrt{A -Bu^2}}
\end{equation*}
for two indeterminates $(A,B)$ independent of the variable $u$. Then 
\begin{equation*}
I_{A,B}(u) = \frac{1}{\sqrt{A-B}}\log\left[\frac{(\sqrt{A-Bu^2} + \sqrt{A-B}u)^2}{A(1-u^2)}\right],
\end{equation*}
provided the square root is well-defined (we can take any determination of the logarithm). Taking $A = 1+ k^2g_k(0), B = (k-2)^2g_k(0)$, one gets
\begin{equation*}
I_{A,B}(y(t)) - I_{A,B}(y)  = t,
\end{equation*}
or after exponentiating this identity:
\begin{align*}
\frac{(\sqrt{A-B(y_k(t))^2} + \sqrt{A-B}y_k(t))^2}{A(1-y_k(t)^2)} = e^{\sqrt{A-B}t} \frac{(\sqrt{A-By^2} + \sqrt{A-B}y)^2}{A(1-y^2)}
\end{align*}
Noting that 
\begin{equation*}
f_k(0) = -\frac{1+y}{2\tilde{\lambda}_k(y)} \quad 4g_k(0) = \frac{y^2-1}{\tilde{\lambda}_k(y)}, 
\end{equation*}
then $A-By^2 = 1 + 4g_k(0) \tilde{\lambda}_k(y) = y^2$ which in turn entails: 
\begin{align*}
\frac{(\sqrt{A-B(y_k(t))^2} + \sqrt{A-B}y_k(t))^2}{A(1-y_k(t)^2)} & = e^{\sqrt{A-B}t} \frac{y^2(1 - \sqrt{A-B})^2}{A(1-y^2)} 
\\& = e^{\sqrt{A-B}t} \frac{1 - \sqrt{A-B}}{1+\sqrt{A-B}}.
\end{align*}
If we denote the LHS of the second equality $\xi_{2t}(\sqrt{A-B})$, then lengthy computations yield: 
\begin{align*}
y_k^2(t) &= \frac{A(1-\xi_{2t}(\sqrt{A-B}))^2}{A(1+\xi_{2t}(\sqrt{A-B}))^2 -4B\xi_{2t}(\sqrt{A-B})} 
\\& =  \frac{A}{(A-B)H^2[\xi_{2t}(\sqrt{A-B})] +B} 
\\& = \frac{1+k^2g_k(0)}{(1+4(k-1)g_k(0))[H^2[\xi_{2t}(\sqrt{1+4(k-1)g_k(0)}]-1] + (1+k^2g_k(0))}. 
\end{align*}
The map $(-\xi_{2t})$ is the inverse of the Herglotz transform $1+2\eta_{t,2}$ of the spectral distribution of $U_{2t}$ in a neighborhood of $u=1$. As a matter of fact, the map 
\begin{equation*}
u \mapsto \frac{1+k^2u}{(1+4(k-1)u)[H^2[\xi_{2t}(\sqrt{1+4(k-1)u}]-1] + (1+k^2u)}
\end{equation*}
is a locally invertible in a neighborhood of the origin $u=0$. Let $\zeta_{2t}$ be its inverse then 
\begin{equation*}
g_k(0) = \zeta_{2t}(y_k^2(t)),
\end{equation*}
and in turn 
\begin{align*}
f_k(t)  &= \frac{-1+\sqrt{1+4\zeta_{2t}(y_k^2(t)) \tilde{\lambda}_k(y_k(t))}}{2\tilde{\lambda}_k(y_k(t))} 
\\& = 2 \frac{\zeta_{2t}[H^2(z_k(t))]}{1+\sqrt{1+\zeta_{2t}[H^2(z_k(t))][k^2-(k-2)^2H^2(z_k(t))]}} =  \tilde{\rho}_{t,k}(z_k(t)).
\end{align*}
\begin{rem}
If $k=2$ then 
\begin{equation*}
y_2^2(t) = \frac{1}{H^2[\xi_{2t}(\sqrt{1+4g_2(0)}]} = \frac{1}{H^2[\xi_{2t}(-y)]} = H^2[-\xi_{2t}(-y)].
\end{equation*}
Thus, it holds locally:
\begin{equation*}
y_2(t) = H[-\xi_{2t}(-y)] \quad \Rightarrow \quad z_2(t) = (-\xi_{2t})\left(\frac{1+z}{1-z}\right) = ze^{t(1+z)/(1-z)}.  
\end{equation*}
For fixed $t > 0$, the map $z \mapsto z_2(t)$ is known as the $\Sigma$-transform of the spectral distribution of $U_{2t}$ (\cite{Bia}). 
 \end{rem}

\section{Concluding remarks}
So far, we introduced a dynamical random density matrix by means of $k \geq 2$ independent unitary Brownian motions whose large size limit has, up to a normalization, the same moments as those of the free Jacobi process $PU_tPU_t^{\star}P$  (in the compressed algebra) subject to $\tau(P) = 1/k$. Motivated by our previous results proved in \cite{DHH} valid for $k=2$, we derived for any $k \geq 2$ a binomial-type expansion for these moments and gave rise to a non normal (except when $k=2$) operator. In \cite{Ham}, another approach is undertaken and relies rather on the spectral dynamics of the unitary operator $T_2U_tT_2U_t^{\star}$. Specializing Theorem 1.1. there to $P=Q$ with $\tau(P) = 1/k$, the spectral distribution of $PU_tPU_t^\star P$ 
admits a density given by:
\begin{equation*}
\mu_t^k(x)=\frac{1}{2\pi}\frac{g_t^k\left(2\arccos(\sqrt{x})\right)}{\sqrt{x-x^2}},
\end{equation*}
where $g^k_t$ is the density of the spectral distribution of $T_2U_tT_2U_t^{\star}$. However, for $k \geq 3$, $g_t^k$ admits a very complicated expression compared to the simple one corresponding to $k=2$. 

On the other hand, it was recently proved in \cite{Dem-Ham2} (see eq. (3) there) that 
\begin{equation*}
\tau[(PQP)^{n}] - \tau[(({\bf 1} - P)({\bf 1} - Q)({\bf 1}-P))^n] = \tau(P) + \tau(Q) - 1,
\end{equation*}
for any self-adjoint projections $(P,Q)$ with arbitrary traces $\tau(P), \tau(Q) \in (0,1)$. In particular, if $Q = U_tPU_t^{\star}$ and $\tau(P) = 1/k$ then we readily deduce: 
\begin{equation*}
\tau[(({\bf 1} - P)U_t({\bf 1} - P)U_t^{\star}({\bf 1} - P))^n] = \tau[(PU_tPU_t^{\star}P)^n] + 1 -\frac{2}{k}.
\end{equation*} 
Equivalently, the moments of the free Jacobi process associated with ${\bf 1}-P$ may be deduced from those of the free Jacobi process associated with $P$: 
\begin{equation*}
\frac{k}{k-1}\tau[(({\bf 1} - P)U_t({\bf 1} - P)U_t^{\star}({\bf 1} - P))^n] = \frac{k}{k-1}\tau[(PU_tPU_t^{\star}P)^n] + \frac{k-2}{k-1}.
\end{equation*} 
This fact reminds the duality between linear subspaces and their complementaries in Grassmann manifolds and is not surprising since the free Jacobi process is the large size limit of the radial part of the Brownian motion in the complex Grassmann manifold (see \cite{Dem} for further details). 

\appendix 
\section{Moments of stationary distribution}
In this appendix, we derive another expression of 
\begin{equation*}
m_n(\infty) = \int x^n \tilde{\mu}_{\infty}^k(dx) = \frac{1}{2\pi} \int x^{n-1/2}\frac{\sqrt{4(k-1) - k^2x}}{(1 -x)}{\bf 1}_{[0,4(k-1)/k^2]}(x) dx.
\end{equation*}
To the best of our best knowledge, formula \eqref{MomStat2} below never appeared in literature. Compared to \eqref{MomStat2}, it has the merit to separate the case $k=2$ corresponding to the arcsine distribution from other values $k \geq 3$. 
Our main ingredients are two properties satisfied by the Gauss hypergeometric function. 

To proceed, perform the variable change $x = 4(k-1)y/k^2$ to write:
\begin{align*}
m_n(\infty) &= \frac{4^{n+1}(k-1)^{n+1}}{2\pi k^{2n+1}}\int y^{n-1/2}\frac{\sqrt{1-y}}{1 - 4(k-1)y/k^2} {\bf 1}_{[0,1]}(y) dy 
\\&= \frac{4^{n}(k-1)^{n+1}}{\sqrt{\pi} k^{2n+1}}\frac{\Gamma(n+1/2)}{\Gamma(n+2)} {}_2F_1\left(1, n+\frac{1}{2}, n+2; \frac{4(k-1)}{k^2}\right).
\\& =  \frac{4^{n}(k-1)^{n+1}}{n!k^{2n+1}}\left\{(-1)^n\sqrt{1-z}\frac{d^n}{dz^n} \left[(1-z)^{n-1/2}{}_2F_1\left(\frac{1}{2},1,2; z\right)\right]\right\}_{z = 4(k-1)/k^2} 
 \\& = 2\frac{4^{n}(k-1)^{n+1}}{n!k^{2n+1}}\left\{\sqrt{1-z}(-1)^n\frac{d^n}{dz^n} \left[\frac{(1-z)^{n-1/2}}{1+\sqrt{1-z}}\right]\right\}_{z = 4(k-1)/k^2} 
 \\& = 2\frac{4^{n}(k-1)^{n+1}}{n!k^{2n+1}}\left\{\sqrt{z}\frac{d^n}{dz^n} \left[\frac{z^{n-1/2}}{1+\sqrt{z}}\right]\right\}_{z = [(k-2)/k]^2} 
 \\& = 2\frac{4^{n}(k-1)^{n+1}}{n!k^{2n+1}}\left\{\sqrt{z}\frac{d^n}{dz^n} \left[z^{n-1/2} - \frac{z^n}{1+\sqrt{z}} \right] \right\}_{z = [(k-2)/k]^2} 
 \\& = \frac{2(k-1)^{n+1}}{k^{2n+1}}\left\{\binom{2n}{n} -\frac{4^{n}}{n!}\sqrt{z}\frac{d^n}{dz^n}\left[\frac{z^n}{1+\sqrt{z}} \right] \right\}_{z = [(k-2)/k]^2}.
 \end{align*}
 Here ${}_2F_1$ is the hypergeometric function, the second equality follows from its Euler integral representation, the third and fourth ones follow from the variational formula (25), p.102 and formula (6), p.101 in Erdelyi's book. 
 Using direct computations, we readily see that 
 \begin{equation*}
\frac{d^n}{dz^n}\left[\frac{z^n}{1+\sqrt{z}} \right] = \frac{\mathscr{P}_n(\sqrt{z})}{2^n(1+\sqrt{z})^{n+1}}
 \end{equation*}
 for some polynomial of degree $n$. For instance 
 \begin{equation*}
 \mathscr{P}_0(x) = 1, \mathscr{P}_1(x) = x+2, \mathscr{P}_2(x) = 3x^2+9x+8, \mathscr{P}_3(x) = 15x^3+60x^2+87x+48. 
 \end{equation*}
 Consequently, 
 \begin{equation}\label{MomStat2}
m_n(\infty) = \frac{2(k-1)^{n+1}}{k^{2n+1}}\left\{\binom{2n}{n} -\frac{(k-2)}{2n!(k-1)^{n+1}}k^n\mathscr{P}_n\left(\frac{k-2}{k}\right)\right\}. 
\end{equation}

\section{Free cumulants of an orthogonal projection}
The first part of the proof is a routine computation in free probability theory and we refer the reader to \cite{HP} for further details on this machinery. 
Start with the Cauchy transform of $P$: 
\begin{equation*}
\tau[(z-P)^{-1}] = \frac{\alpha}{z-1} + \frac{1-\alpha}{z} = \frac{z + \alpha-1}{z(z-1)}, \quad z \notin \{0,1\}.
\end{equation*}
Next, consider the equation 
\begin{equation*}
yz^2 -z(y+1) + 1-\alpha = 0, 
\end{equation*}
for $y$ lying in a neighborhood of zero. Then the $K$-transform of $P$ reads: 
\begin{equation*}
K(y) = \frac{y+1 + \sqrt{y^2+1 -2y(1-2\alpha)}}{2y} 
\end{equation*}
and in turn, its $R$-transform is given by 
\begin{equation*}
R(y) =  K(y) -\frac{1}{y} = \frac{1}{2}\left[1 + \frac{\sqrt{y^2+1 -2y(1-2\alpha)}-1}{y}\right]
\end{equation*}
It remains to write down the Taylor expansion of the function: 
\begin{equation*}
f_{\alpha}: y \mapsto \frac{\sqrt{y^2+1 -2y(1-2\alpha)}-1}{y}. 
\end{equation*}
To this end, we appeal to the generating series of Legendre polynomials: 
\begin{equation*}
\sum_{n=0}^{\infty} P_n(x)y^n = \frac{1}{\sqrt{1+y^2-2xy}}, \quad |y| < 1. 
\end{equation*}
Indeed, setting $\beta := 1-2\alpha \in [-1,1]$, one has 
\begin{equation*}
[yf_{\alpha}(y)]' = (y - \beta) \sum_{n\geq 0} P_n(\beta) y^n 
\end{equation*}
so that 
\begin{align*}
f_{\alpha}(y) &= \sum_{n \geq 1}\frac{y^{n}}{n+1} [P_{n-1}(\beta) - \beta P_n(\beta)] - \beta 
\\& = \sum_{n \geq 1}\frac{y^{n}}{2n+1} \left[P_{n-1}(\beta) - P_{n+1}(\beta)\right] - \beta
\end{align*}
where the last equality follows from the recurrence relation: 
\begin{equation*}
(2n+1)xP_n(x) = (n+1)P_{n+1}(x) + nP_{n-1}(x).
\end{equation*}
Extracting the Taylor coefficients of $f_{\alpha}$ and recalling the definition
\begin{equation*}
R(y) =\sum_{n \geq 0}\kappa_{n+1}(P)y^n
\end{equation*}
 we get \eqref{FreeCumP}. 
 
Note that since Legendre polynomials are orthogonal with respect to the uniform distribution in $[-1,1]$, they are parity preserving. In particular, 
\begin{equation*}
P_{2n+1}(0) = 0, \quad P_{2n}(0) = (-1)^n\frac{(1/2)_n}{n!},
\end{equation*}
so that one recovers \eqref{FreeCumPk2} after some computations. 



 \end{document}